\newcommand{\doi}[1]{\url{https://doi.org/#1}}
\newcommand{\isbn}[1]{\url{https://isbnsearch.org/isbn/#1}}
\newcommand{\arxiv}[1]{\href{https://arxiv.org/abs/#1}{preprint arXiv:#1}}
\newcommand{\web}[1]{\url{#1}}
\renewcommand{\emptyset}{\varnothing}
\numberwithin{equation}{section}
\newtheorem{thm}{Theorem}[section]
\newtheorem{lemma}[thm]{Lemma}
\newtheorem{prop}[thm]{Proposition}
\newtheorem{rem}[thm]{Remark}
\newtheorem{cor}[thm]{Corollary}
\newtheorem{df}[thm]{Definition}
\newtheorem{ex}[thm]{Example}
\newcommand{\A}{\mathcal{A}}
\newcommand{\Z}{\mathbb{Z}}
\newcommand{\N}{\mathbb{N}}
\newcommand{\mv}[1]{\smash[b]{\underline{#1}}}
\newcommand{\id}{\mathrm{id}}
\def\@tocline#1#2#3#4#5#6#7{\relax
  \ifnum #1>\c@tocdepth % then omit
  \else
    \par \addpenalty\@secpenalty\addvspace{#2}%
    \begingroup \hyphenpenalty\@M
    \@ifempty{#4}{%
      \@tempdima\csname r@tocindent\number#1\endcsname\relax
    }{%
      \@tempdima#4\relax
    }%
    \parindent\z@ \leftskip#3\relax \advance\leftskip\@tempdima\relax
    \rightskip\@pnumwidth plus4em \parfillskip-\@pnumwidth
    #5\leavevmode\hskip-\@tempdima
      \ifcase #1
       \or\or \hskip 1em \or \hskip 2em \else \hskip 3em \fi%
      #6 \hskip 0.5em \nobreak\relax
    \dotfill\hbox to\@pnumwidth{\@tocpagenum{#7}}\par
    \nobreak
    \endgroup
  \fi}
\begin{document}
\leftmargini=2em

\title{\vspace*{-1cm}The graph groupoid of a quantum sphere}

\author[F.~D'Andrea]{\vspace*{-5mm}Francesco D'Andrea}

\address{Dipartimento di Matematica e Applicazioni ``R.~Caccioppoli'' \\ Universit\`a di Napoli Federico II \\
and INFN Sezione di Napoli \\
Complesso MSA, Via Cintia, 80126 Napoli, Italy}

\subjclass[2020]{Primary: 46L67; Secondary: 20G42; 58B32; 22A22.}

\keywords{Quantum groups, Vaksman-Soibelman spheres, graph C*-algebras, \'etale groupoids.}

\begin{abstract}
Quantum spheres are among the most studied examples of compact quantum spaces,
described by C*-algebras which are Cuntz-Krieger algebras of a directed graph, as proved by Hong and Szyma{\'n}ski in 2002. About five years earlier, in 1997, Sheu proved that
the C*-algebra of a quantum sphere is a groupoid C*-algebra. Here we show that the path groupoid of the directed graph of Hong and Szyma{\'n}ski is isomorphic to the groupoid discovered by Sheu.
\end{abstract}

\maketitle

\vspace*{-2mm}

\begin{center}
\begin{minipage}{0.8\textwidth}
\parskip=0pt\small\tableofcontents
\end{minipage}
\end{center}

\medskip\smallskip

\section{Introduction}
Odd-dimensional quantum spheres were introduced in 1990 by Vaksman and Soibelman as quantum homogeneous spaces of Woronowicz's quantum unitary groups \cite{VS91}.
Since then, they have been a main example for testing ideas about noncommutative spaces, one of the reasons being their relation to quantum complex projective spaces (see e.g.~\cite{DL12} and references therein).
About half a decade after their discovery, Sheu proved that the C*-algebra of a quantum sphere is isomorphic to a groupoid C*-algebra \cite{She97bis,She97}, and another half a decade later, Hong and Szyma{\'n}ski proved that it is a graph C*-algebras \cite[Theorem 4.4]{HS02} (see also \cite{Dan23} for an overview). The graph C*-algebra realization of quantum spheres is used, for example, to describe the CW-structure (in the sense of \cite{DHMSZ20}) of quantum projective spaces, cf.~\cite{ADHT22}.
The groupoid C*-algebra realization also has many applications, e.g.\ it is used in \cite{She19} to compute the monoid of finitely generated projective modules over the C*-algebra of the quantum sphere and to prove that the cancellation property does not hold.

The aim of this paper is to show that the path groupoid of the directed graph of Hong and 
Szyma{\'n}ski is isomorphic to the groupoid discovered by Sheu.
We emphasize that this result cannot simply be inferred from the fact that the two groupoids have isomorphic C*-algebras (non-isomorphic groupoids can have isomorphic groupoid C*-algebras: for example, the group C*-algebra of a finite abelian group depends only on its cardinality \cite{Dad71}).

\medskip

In this paper, we denote by $\N$ the set of natural numbers (including $0$) and by $\overline{\N}=\N\cup\{+\infty\}$ its one-point compactification. We denote by $S^1$ the set of unitary complex numbers.
We adopt the convention that an empty sum is $0$.

\section{The graph C*-algebra of a quantum sphere}\label{sec:4}

Quantum spheres were originally defined in terms of a set of ``non-commuting coordinates'', generating a *-algebra denoted $\A(S^{2n+1}_q)$, where the odd integer $2n+1$ is the classical dimension and $q\in\ointerval{0}{1}$ is the deformation parameter \cite{VS91}. While the algebra $\A(S^{2n+1}_q)$ strongly depends on $q$ (different values of $q$ correspond to non-isomorphic algebras, see \cite{Dan24b} for the precise statement and for the proof), the C*-enveloping algebra $C(S^{2n+1}_q)$ turns out to be independent of $q$ (as first proved in \cite{She97bis,She97}) and isomorphic to a graph C*-algebra \cite{HS02}.
Let us recall the graph C*-algebra picture.

Our main references are \cite{BPRS,R05}. We adopt the conventions of \cite{BPRS}, i.e.~the roles of source and range maps are exchanged with respect to~\cite{R05}.

Let $E=(E^0,E^1,s,t)$ be a directed graph, where $E^0$ is the set of vertices, $E^1$ is the set of edges, \mbox{$s:E^1\to E^0$} is the source map,
and \mbox{$t:E^1\to E^0$} is the target (or range) map. 
A \emph{sink} is a vertex $v$ with no outgoing edges, that is $s^{-1}(v)=\emptyset$.
The graph is called \emph{finite} if both the sets $E^0$ and $E^1$ are finite. In this paper, we are only interested in finite graphs without sinks.

The \emph{graph \mbox{C*-algebra}} $C^*(E)$ of a finite graph $E$ without sinks is the universal \mbox{C*-algebra} generated by mutually 
orthogonal projections $\big\{P_v:v\in E^0\big\}$ and partial isometries $\big\{S_e:e\in E^1\big\}$ satisfying the \emph{Cuntz--Krieger relations}:
\begin{alignat}{2}
S_e^*S_e &=P_{t(e)} \qquad && \text{for all }e\in E^1\text{, and}\tag{\text{CK1'}} \label{eq:CK1p} \\
\sum_{e\in E^1:\,s(e)=v}\!\! S_eS_e^* &=P_v && \text{for all }v\in E^0 . \tag{CK2'} \label{eq:CK2p}
\end{alignat}
The existence of this universal object is a non-trivial theorem (see \cite{R05,BPRS}, also for the precise meaning of ``universal'' in this context).

Hong and Szyma{\'n}ski proved in \cite{HS02} that, for every $n\in\N$, $C(S^{2n+1}_q)$ is isomorphic to the C*-algebra of the graph with vertex set $\{1,\ldots,n+1\}$, and with one arrow from the vertex $i$ to the vertex $j$ for every $i\leq j$. The same C*-algebra can be obtained from a different graph, which is the one we will use in this paper. This graph can be found in the appendix of \cite{HS02}, has the same vertex set $\{1,\ldots,n+1\}$, one loop $\ell_i$ at each vertex and one arrow $r_i$ from $i$ to $i+1$ for all $1\leq i\leq n$. A picture of this graph is in Figure~\ref{fig:sphereB}.

An explicit formula gives the ``coordinates'' generating $\A(S^{2n+1}_q)$ as norm-convergent series in the graph C*-algebra generators, see \cite{HS02} (see also \cite{Dan23}).

\begin{figure}[t]
\begin{center}
\begin{tikzpicture}[>=stealth,node distance=2cm,
main node/.style={circle,inner sep=2pt},
freccia/.style={->,shorten >=2pt, shorten <=2pt},
ciclo/.style={out=50, in=130, loop, distance=2cm},
font=\small]

\clip (-0.6,-0.5) rectangle (10.4,1.4);

      \node[main node] (1) {};
      \node (2) [main node,right of=1] {};
      \node (3) [main node,right=2.7cm of 2] {};
      \node (4) [main node,right of=3] {};
      \node (5) [main node,right=2.7cm of 4] {};

      \filldraw (1) circle (0.06) node[below=2pt] {$1$};
      \filldraw (2) circle (0.06) node[below=2pt] {$2$};
      \filldraw (3) circle (0.06) node[below=2pt] {$i$};
      \filldraw (4) circle (0.06) node[below=2pt] {$i+1$};
      \filldraw (5) circle (0.06) node[below=2pt] {$n+1$};

      \path[freccia] (1) edge[ciclo] (1);
      \path[freccia] (2) edge[ciclo] (2);
      \path[freccia] (3) edge[ciclo] node[below] {$\,\ell_i$} (3);
      \path[freccia] (4) edge[ciclo] (4);
      \path[freccia] (5) edge[ciclo] (5);

      \path[freccia] (1) edge (2) (3) edge node[above] {$r_i$} (4);
      \path[freccia,dashed] (2) edge (3) (4) edge (5);
\end{tikzpicture}
\end{center}
\caption{The graph of $S^{2n+1}_q$.}
\label{fig:sphereB}
\end{figure}
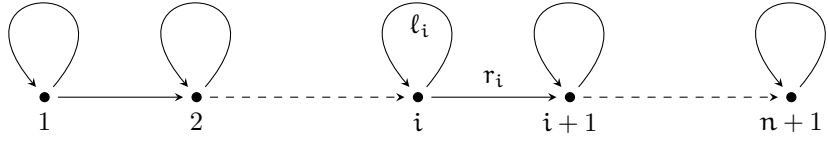

\section{From directed graphs to \'etale groupoids}\label{sec:7}
We want now to derive the groupoid picture of a quantum sphere in \cite{She97} from the graph C*-algebra picture. For the realization of a graph C*-algebra as a groupoid \mbox{C*-algebra} we will follow \cite{KPRR97}, but under the simplifying assumption that we have a finite graph, rather than just row-finite. Note that \cite{KPRR97} uses the same conventions as \cite{BPRS} for graph C*-algebras.

\smallskip

It is well known that the correspondence between groupoids and their (full or reduced) groupoid C*-algebra is not functorial, at least not in a naive way \cite{AM21}. However, it preserves isomorphisms: isomorphic groupoids are associated to isomorphic (full or reduced) groupoid C*-algebras. In the next sections we will construct an explicit isomorphism between the groupoid of the graph of $S^{2n+1}_q$ and Sheu's groupoid.
It is worth mentioning that in \cite[\S 7.3]{BCL14} the groupoid of Sheu is derived using the framework of geometric quantization.

\smallskip

Let us start by recalling some basic constructions of groupoids, to fix the notations.
Given a groupoid $\mathcal{G}:\mathcal{G}^1\;\mathop{\rightrightarrows}\;\mathcal{G}^0$ and a subset $\mathcal{S}^0$ of $\mathcal{G}^0$, we can form the subgroupoid \mbox{$\mathcal{S}:\mathcal{S}^1\;\mathop{\rightrightarrows}\;\mathcal{S}^0$} whose morphisms are all the morphisms in $\mathcal{G}$ with both source and target in $\mathcal{S}^0$. Notice that $\mathcal{S}$ is a full subcategory of $\mathcal{G}$, hence the name \emph{full subgroupoid}, or also \emph{reduction} of $\mathcal{G}$ to the set of objects $\mathcal{S}^0$. If $\mathcal{G}$ is a topological groupoid, we will consider $\mathcal{S}$ as a topological groupoid with subspace topology on both morphisms and objects.

\smallskip

If $\mathcal{G}:\mathcal{G}^1\;\mathop{\rightrightarrows}\;\mathcal{G}^0$ and $\mathcal{F}:\mathcal{F}^1\;\mathop{\rightrightarrows}\;\mathcal{F}^0$ are two topological groupoids, one can form the Cartesian product
$\mathcal{G}\times\mathcal{F}:\mathcal{G}^1\times\mathcal{F}^1\;\mathop{\rightrightarrows}\;\mathcal{G}^0\times\mathcal{F}^0$, with product topology, and all structure maps (source, target, unit, multiplication, inverse) given componentwise. If $\mathcal{G}$ is a group, the set of objects $\mathcal{G}^0\times\mathcal{F}^0$ of the product groupoid can (and will) be identified with $\mathcal{F}^0$.

\smallskip

An equivalence relation $\mathcal{R}\subseteq X\times X$ on a set $X$ can be viewed as a groupoid $\mathcal{R}\;\mathop{\rightrightarrows}\;X$. Source and target of $(x,y)\in\mathcal{R}$ are $x$ and $y$, respectively, and one defines composition and inverse by $(x,y)\cdot (y,z)=(x,z)$ and $(x,y)^{-1}=(y,x)$. If $X$ is a topological space, we will refer to the topology on $\mathcal{R}$ induced by the product topology on $X\times X$ as the \emph{standard topology}.
A \emph{principal groupoid} is one of the form $\mathcal{R}\;\mathop{\rightrightarrows}\;X$ for some equivalence relation $\mathcal{R}$ on $X$, and some topology on $\mathcal{R}$ which is not necessarily the standard one.

\smallskip

Finally, if $\Gamma$ is a group acting on a set $X$, the \emph{transformation groupoid}
\[
\Gamma\ltimes X\;\mathop{\rightrightarrows}\;X
\]
has source and target maps, composition and inverse given by
\[
s(g,x):=gx , \qquad
t(g,x):=x , \qquad
(g,hx)\cdot (h,x) :=(gh,x) , \qquad
(g,x)^{-1}=(g^{-1},gx) .
\]
If $X$ is a second-countable locally compact Hausdorff space and $\Gamma$ a discrete group acting on $X$ by homeomorphisms, then the transformation groupoid is \'etale in the product topology.

Denote by $X\times_{X/\Gamma}X$ the equivalence relation on $X$ consisting of pair of points in the same $\Gamma$-orbit.
The \emph{canonical map}
\[
\Gamma\times X\longrightarrow X\times_{X/\Gamma}X , \qquad
(g,x)\longmapsto (gx,x) ,
\]
is a morphism of topological groupoids w.r.t.~the standard topology on $X\times_{X/\Gamma}X$.
When the transformation groupoid is {\'e}tale, the canonical map is a local homeomorphism, since its components are the source and target maps, which are local homeomorphisms. If, in addition, $\Gamma$ acts freely on $X$, then the canonical map is bijective, which means that it is a (global) homeomorphism and hence an isomorphism of topological groupoids.

The main example of transformation groupoid, that we will use later on, is the following (in this example, the group action is not free).

\begin{ex}\label{ex:Toeplitzgroupoid}
Let $\overline{\Z}=\Z\cup\{+\infty\}$ be the one-point compactification of $\Z$, and consider the action of
$\Z^n$ on $\overline{\Z}^n$ by translations, with $k+\infty :=+\infty$ for all $k\in\Z$.
The corresponding transformation groupoid $\Z^n\ltimes\overline{\Z}^n$ is \'etale.
Its reduction $(\Z^n\ltimes\overline{\Z}^n)|_{\overline{\N}^n}$ is called the \emph{Toeplitz groupoid} of order $n$.
\end{ex}

Note the unusual convention that we adopt, following \cite{KPRR97}: a composition $fg$ of two morphisms is defined if the target of $f$ is the source of $g$. In the language of functions, the source of a morphism $f$ is the codomain, the target is the domain, and $fg=f\circ g$ is the composition (in the correct order) of the two functions.

\bigskip

Now, let $X$ be a second-countable locally compact Hausdorff space and \mbox{$\sigma:X\to X$} a local homeomorphism.
We will call the pair $(X,\sigma)$ a \emph{dynamical system}, and think of $\sigma$ as generating a (discrete) time evolution, i.e.~an action of the monoid $\N$ on $X$.
A special case is when $X$ is the space of right-infinite paths in a graph.

Let $E=(E^0,E^1,s,t)$ be a finite graph with no sinks. Let
\[
E^\infty:=\left\{x=(x_1,x_2,x_3,\ldots) \in\prod\nolimits_{i=1}^\infty E^1:t(x_i)=s(x_{i+1})\;\forall\;i\geq 1\right\}
\]
be the set of right-infinite paths in $E$. If $\alpha=(\alpha_1,\ldots,\alpha_j)$ is a finite path, we define $s(\alpha):=s(\alpha_1)$ and $t(\alpha):=t(\alpha_j)$. If $x\in E^\infty$ has $s(x):=s(x_1)=t(\alpha)$, we denote by
\[
\alpha x=(\alpha_1,\ldots,\alpha_j,x_1,x_2,x_3,\ldots) \in E^\infty
\]
the concatenation of $\alpha$ and $x$. We will think of a vertex as a path of length $0$. If $\alpha$ is a vertex $v$, we define $s(\alpha)=t(\alpha)=v$ to uniform the notations, and in this case by $\alpha x$ we mean simply the path $x$, whenever $x$ has source $v$. We denote by $|\alpha|$ the length of a finite path $\alpha$.

We put on $E^1$ the discrete topology, on $\prod_{i=1}^\infty E^1$ the Tychonoff topology, and on $E^\infty$ the corresponding subspace topology.
With this topology, $E^\infty$ is a totally-disconnected compact Hausdorff space. For every finite path $\alpha$ in $E$, define a \emph{cylinder set} $U(\alpha)$ as follows:
\begin{equation}\label{eq:cylinderset}
U(\alpha):=\big\{\alpha x :x\in E^\infty, s(x)=t(\alpha) \big\} .
\end{equation}
The collection of all cylinder sets is a basis (of clopen sets) for the topology of $E^\infty$. For every $x\in E^\infty$, any infinite subfamily of $\big\{U(x_1,\ldots,x_j):j\in\Z_+\big\}$ is a neighborhood basis of $x$.

The \emph{shift}
\begin{equation}\label{eq:localhom}
\sigma:E^\infty\to E^\infty \;,\qquad\sigma(x_1,x_2,x_3,\ldots):=(x_2,x_3,\ldots) ,
\end{equation}
is a local homeomorphism (it restricts to a homeomorphism on $U(\alpha)$ for all $\alpha$).

Given $x,y\in E^\infty$ and $k\in\Z$, if there exists $j\in\N$ such that $j+k\geq 0$ and $\sigma^j(x)=\sigma^{j+k}(y)$, we will say that $x$ and $y$ are \emph{shift equivalent} with \emph{lag} $k$ and write
\[
x\sim_ky .
\]
Being shift equivalent (for some $k\in\Z$) is an equivalence relation. Being shift equivalent with lag $0$ is a sub-relation, that we call \emph{tail equivalence}.

The \emph{graph groupoid} of $E$ is defined as follows \cite{KPRR97}. As an algebraic groupoid,
\[
\mathcal{G}^1:=\big\{(x,k,y)\in E^\infty\times\Z\times E^\infty:x\sim_k y \big\}
\;\mathop{\rightrightarrows}\;\mathcal{G}^0:=E^\infty \;.
\]
Source and target of $(x,k,y)$ are $x$ and $y$, respectively. The composition law is
\[
(x,j,y)\cdot (y,k,z):=
(x,j+k,z) .
\]
The inverse of $(x,k,y)$ is $(y,-k,x)$, and the element $(x,0,x)$ is the unit at $x$.
If $\alpha$ and $\beta$ are finite paths in $E$ with the same target, we define
\begin{equation}\label{eq:basicopenZ}
Z(\alpha,\beta):=\big\{ (\alpha x,|\beta|-|\alpha|,\beta x): x\in E^\infty, s(x)=t(\alpha)=t(\beta) \big\} .
\end{equation}
On $\mathcal{G}^1$ we consider the topology with basic open sets \eqref{eq:basicopenZ}.
With this topology, the groupoid is locally compact, Hausdorff, \'etale and amenable (cf.~\cite{KPRR97}), which means that the reduced and full groupoid C*-algebras are isomorphic.

It was proved in \cite{KPRR97} (for a more general class of graphs than those considered here) that:

\begin{thm}[\protect{\cite{KPRR97}}]\label{thm:isoEG}
If $\mathcal{G}$ is the graph groupoid of $E$, then $C^*(\mathcal{G})\cong C^*(E)$.
\end{thm}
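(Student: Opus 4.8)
The plan is to produce the isomorphism concretely rather than abstractly: I would exhibit a Cuntz--Krieger $E$-family \emph{inside} $C^*(\mathcal{G})$, invoke the universal property of $C^*(E)$ to obtain a $*$-homomorphism $\pi\colon C^*(E)\to C^*(\mathcal{G})$, and then show separately that $\pi$ is surjective and injective. The whole computation rests on one elementary fact about étale groupoids: each compact open bisection $B\subseteq\mathcal{G}^1$ yields an element $\mathbf{1}_B\in C_c(\mathcal{G})\subseteq C^*(\mathcal{G})$, and under convolution one has $\mathbf{1}_B*\mathbf{1}_{B'}=\mathbf{1}_{BB'}$ and $\mathbf{1}_B^*=\mathbf{1}_{B^{-1}}$ whenever $B,B'$ are bisections (the one-term sums in the convolution are what make this work). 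The sets $Z(\alpha,\beta)$ of \eqref{eq:basicopenZ} are exactly such compact open bisections, so this \emph{bisection calculus} lets me translate all C*-algebraic products into set-theoretic products of the $Z(\alpha,\beta)$.

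First I would define the candidate family. For $v\in E^0$ set $P_v:=\mathbf{1}_{Z(v,v)}$, the indicator of the clopen unit-space set $\{(x,0,x):s(x)=v\}$; these are mutually orthogonal projections because the cylinders $U(v)$ are pairwise disjoint. For $e\in E^1$ set $S_e:=\mathbf{1}_{Z(e,t(e))}$, reading $t(e)$ as a path of length $0$, so that $Z(e,t(e))=\{(ex,-1,x):s(x)=t(e)\}$. The bisection calculus gives $Z(e,t(e))^{-1}Z(e,t(e))=Z(t(e),t(e))$, hence $S_e^*S_e=P_{t(e)}$, which is \eqref{eq:CK1p}; and $Z(e,t(e))Z(e,t(e))^{-1}=Z(e,e)$, whose unit-space image is the cylinder $U(e)$, so $S_eS_e^*=\mathbf{1}_{U(e)}$. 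Summing over the edges with $s(e)=v$ and using the disjoint decomposition $U(v)=\bigsqcup_{s(e)=v}U(e)$ yields \eqref{eq:CK2p}; the hypothesis that $E$ has no sinks is what guarantees this is a genuine decomposition rather than an empty sum. By universality, $\pi$ exists and sends $P_v,S_e$ to these elements.

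For surjectivity, writing $S_\alpha:=S_{\alpha_1}\cdots S_{\alpha_j}$ for a finite path $\alpha$, the same calculus gives $S_\alpha=\mathbf{1}_{Z(\alpha,t(\alpha))}$ and therefore $S_\alpha S_\beta^*=\mathbf{1}_{Z(\alpha,\beta)}$ whenever $t(\alpha)=t(\beta)$. Since the $Z(\alpha,\beta)$ form a basis of compact open bisections for the étale topology on $\mathcal{G}^1$, their indicators span a dense $*$-subalgebra of $C^*(\mathcal{G})$; hence the image of $\pi$ is dense, and being a C*-subalgebra it is closed, so $\pi$ is onto.

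The injectivity of $\pi$ is the main obstacle, and here I would appeal to the gauge-invariant uniqueness theorem for graph C*-algebras established in \cite{BPRS}. The map $c\colon\mathcal{G}^1\to\Z$, $c(x,k,y):=k$, is a continuous (indeed locally constant) groupoid cocycle, and it induces a gauge action $\beta$ of $S^1$ on $C^*(\mathcal{G})$ with $\beta_z(f)(\gamma)=z^{c(\gamma)}f(\gamma)$ on $C_c(\mathcal{G})$; in particular $\beta_z(\mathbf{1}_B)=z^n\mathbf{1}_B$ for a bisection $B\subseteq c^{-1}(n)$. Because $c\equiv 0$ on each $Z(v,v)$ and $c\equiv -1$ on each $Z(e,t(e))$, we get $\beta_z\circ\pi=\pi\circ\gamma_{\bar z}$, where $\gamma$ is the usual gauge action of $C^*(E)$; thus $\pi$ is equivariant. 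Moreover $\pi(P_v)=\mathbf{1}_{Z(v,v)}\neq 0$ for every $v$, once more because there are no sinks. The gauge-invariant uniqueness theorem then forces $\pi$ to be injective, completing the isomorphism. I expect the delicate point to be the verification that $\beta$ is a well-defined strongly continuous action intertwining the two gauge actions and, underlying it, the identification of the gauge-fixed (AF-core) subalgebras; amenability of $\mathcal{G}$, which makes $C^*(\mathcal{G})=C^*_r(\mathcal{G})$ and guarantees faithfulness of the canonical conditional expectation, is precisely what legitimizes this core analysis.
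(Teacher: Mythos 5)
The paper does not actually prove this theorem---it is imported from \cite{KPRR97}---but the explicit isomorphism it records immediately afterwards, $P_v\mapsto 1_{Z(v,v)}$ and $S_e\mapsto 1_{Z(e,t(e))}$, is exactly the Cuntz--Krieger family you construct, and your argument (bisection calculus for \eqref{eq:CK1p}--\eqref{eq:CK2p}, density of the span of the $1_{Z(\alpha,\beta)}$ for surjectivity, gauge-invariant uniqueness via the cocycle $c(x,k,y)=k$ for injectivity) is precisely the standard proof from that reference. Your proposal is correct and takes essentially the same approach as the cited source.
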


If $\{P_v,S_e:v\in E^0,e\in E^1\}$ denotes the Cuntz-Krieger $E$-family in $C^*(E)$, the isomorphism in Theorem \ref{thm:isoEG} is explicitly given by
\[
P_v\longmapsto 1_{Z(v,v)}
\qquad
S_e\longmapsto 1_{Z(e,t(e))}
\]
where $1_S$ denotes the characteristic function of a set $S$, cf.~\cite[Prop.~4.1]{KPRR97}.

\smallskip

The following lemma will be useful later on.

\begin{lemma}\label{lemma32}
Assume that for each $x\in E^\infty$ there exists $j\in\N$ such that $\sigma^j(x)$ is a fixed point of $\sigma$. Then, there exists an idempotent map $\sigma^\infty$ from $E^\infty$ to the set of fixed points such that
\begin{enumerate}
\item\label{lemma32:i} $\sigma^j(x)=\sigma^\infty(x)$ for $j\gg 0$.
\item\label{lemma32:ii} $x,y\in E^\infty$ are shift equivalent $\iff$ they are tail equivalent $\iff\sigma^\infty(x)=\sigma^\infty(y)$.
\item\label{lemma32:iii} If $\sigma^\infty(x)=\sigma^\infty(y)$, then $x\sim_ky$ for all $k\in\Z$.
\end{enumerate}
\end{lemma}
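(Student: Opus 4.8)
The plan is to first define $\sigma^\infty$ explicitly and then verify the three listed properties in turn, reducing everything to one elementary observation: a fixed point of $\sigma$ is \emph{absorbing}, in the sense that if $\sigma^{j_0}(x)$ is a fixed point then $\sigma^j(x)=\sigma^{j_0}(x)$ for every $j\geq j_0$. Indeed, a fixed point of $\sigma$ is exactly a constant path $e^\infty=(e,e,e,\dots)$ at a loop $e$, and once $\sigma^{j_0}$ produces such a path, writing $\sigma^j(x)=\sigma^{j-j_0}\bigl(\sigma^{j_0}(x)\bigr)$ shows that all later shifts coincide with it.

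Given this, I would define $\sigma^\infty(x)$ to be the eventual value of the stabilizing sequence $\bigl(\sigma^j(x)\bigr)_{j\in\N}$: by hypothesis some $\sigma^{j_0}(x)$ is a fixed point, and by absorption $\sigma^j(x)$ equals that fixed point for all $j\geq j_0$. This makes $\sigma^\infty$ a well-defined map into the set of fixed points, and property \eqref{lemma32:i} holds by construction, with threshold $j_0=j_0(x)$. Idempotency is immediate, since $\sigma^\infty(x)$ is already fixed, so $\sigma^\infty\bigl(\sigma^\infty(x)\bigr)=\sigma^\infty(x)$. The one extra fact I would record here is that $\sigma^\infty\circ\sigma=\sigma^\infty$, since applying $\sigma$ does not change the eventual fixed point; hence $\sigma^\infty\circ\sigma^j=\sigma^\infty$ for all $j$.

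For property \eqref{lemma32:ii} I would establish the cycle of implications: tail equivalence $\Rightarrow$ shift equivalence $\Rightarrow$ equality of the $\sigma^\infty$-values $\Rightarrow$ tail equivalence. The first implication is trivial, as lag $0$ is a special case. For the second, from $\sigma^j(x)=\sigma^{j+k}(y)$ I apply $\sigma^\infty$ to both sides and use $\sigma^\infty\circ\sigma^m=\sigma^\infty$ to obtain $\sigma^\infty(x)=\sigma^\infty(y)$. For the third, property \eqref{lemma32:i} furnishes a single $j$ large enough that $\sigma^j(x)=\sigma^\infty(x)=\sigma^\infty(y)=\sigma^j(y)$, which is precisely tail equivalence.

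Finally, for property \eqref{lemma32:iii}, assume $\sigma^\infty(x)=\sigma^\infty(y)=:p$. For an arbitrary $k\in\Z$ I would choose $j\in\N$ large enough that simultaneously $j\geq j_0(x)$, $j+k\geq j_0(y)$ and $j+k\geq 0$; then absorption gives $\sigma^j(x)=p=\sigma^{j+k}(y)$, witnessing $x\sim_k y$. The only point requiring a little care — and the closest thing to an obstacle in an otherwise routine argument — is this bookkeeping with the lag: one must check that such a $j$ exists for every $k$, including negative $k$, which follows by taking $j\geq\max\{j_0(x),\,j_0(y)-k,\,-k,\,0\}$ and evaluating both shifts at the common fixed point $p$.
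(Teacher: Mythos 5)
Your proposal is correct and follows essentially the same route as the paper's proof: both rest on the observation that the fixed point reached by iterating $\sigma$ is unique (your ``absorption'' remark is the same computation $\sigma^{k-j}(p)=p$ used in the paper), and both verify \eqref{lemma32:iii} by the same case analysis on the sign of $k$ to pick a valid exponent. The only cosmetic differences are that you prove \eqref{lemma32:ii} via a cycle of implications before \eqref{lemma32:iii}, whereas the paper derives the implication ``equal $\sigma^\infty$-values $\Rightarrow$ tail equivalence'' as the $k=0$ case of \eqref{lemma32:iii}, and that you make the identity $\sigma^\infty\circ\sigma^m=\sigma^\infty$ explicit where the paper uses it implicitly.
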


\begin{proof}
Let $x\in E^\infty$, $j,k\in\N$, and let $\sigma^j(x)=p$ and $\sigma^k(x)=q$ be two fixed points of $\sigma$. Assume that $j\leq k$. Then
\[
q=\sigma^k(x)=\sigma^{k-j}\sigma^j(x)=\sigma^{k-j}(p)=p .
\]
Call $\sigma^\infty(x)$ the unique fixed point associated to $x$. Clearly $\sigma^\infty$ is idempotent, since $x=\sigma^\infty(x)$ for every fixed point $x$, and \ref{lemma32:i} is satisfied.

Assume that $\sigma^\infty(x)=\sigma^\infty(y)=:p$. Then, there exists $m\in\N$ such that $\sigma^i(x)=\sigma^j(y)=p$ for all $i,j\geq m$. Let $k\in\Z$. If $k\geq 0$, then $\sigma^{m}(x)=\sigma^{m+k}(y)=p$.
If $k<0$, then $\sigma^{m-k}(x)=\sigma^{m}(y)=p$. In both cases, $x\sim_ky$. This proves \ref{lemma32:iii}.

In particular, $\sigma^\infty(x)=\sigma^\infty(y)$ implies tail equivalence, and tail equivalence clearly implies shift equivalence.
It remains to prove that shift equivalence implies $\sigma^\infty(x)=\sigma^\infty(y)$.

Let $x,y\in E^\infty$, $k\in\Z$, and assume that $x\sim_ky$. Thus, there is $m_0\in\N$ such that $\sigma^{m}(x)=\sigma^{m+k}(y)$ for all $m\geq m_0$. For $m$ big enough, the left hand side of the latter equality is $\sigma^\infty(x)$ and the right hand side is $\sigma^\infty(y)$, hence the thesis.
\end{proof}

Under the assumption of previous lemma, there is an algebraic isomorphism
\begin{equation}\label{eq:wehaveabj}
\mathcal{G}^1\longrightarrow \Z\times\mathcal{R}_{\mathrm{tail}} , \qquad
(x,k,y)\longmapsto \big(k,(x,y)\big) ,
\end{equation}
where $\mathcal{G}$ is the graph groupoid and $\mathcal{R}_{\mathrm{tail}}:=\{(x,y):x\sim_0y\}$ is the relation of tail equivalence.

\begin{lemma}
Under the assumption of Lemma \ref{lemma32}, the map \eqref{eq:wehaveabj} is continuous.
\end{lemma}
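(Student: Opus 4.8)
The plan is to prove continuity of the map $(x,k,y)\mapsto(k,(x,y))$ by checking continuity on the basic open sets $Z(\alpha,\beta)$ that generate the topology of $\mathcal{G}^1$. Since $\mathbb{Z}$ is discrete, a basic open set of $\mathbb{Z}\times\mathcal{R}_{\mathrm{tail}}$ has the form $\{m\}\times W$ with $W$ open in $\mathcal{R}_{\mathrm{tail}}$, and $\mathcal{R}_{\mathrm{tail}}$ carries the subspace topology from $E^\infty\times E^\infty$ (the standard topology on the tail-equivalence relation). So the first step is to understand the preimage of such a set and show it is open.

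First I would observe that the map factors as the identity on the $\mathbb{Z}$-coordinate together with the forgetful map $(x,k,y)\mapsto(x,y)$. Continuity in the first coordinate is immediate: the locally constant function $(x,k,y)\mapsto k$ takes the constant value $|\beta|-|\alpha|$ on each basic open set $Z(\alpha,\beta)$ by the very definition \eqref{eq:basicopenZ}, so its preimage of any subset of $\mathbb{Z}$ is a union of such $Z(\alpha,\beta)$'s and hence open. It therefore suffices to prove that the second component $\pi:\mathcal{G}^1\to\mathcal{R}_{\mathrm{tail}}\subseteq E^\infty\times E^\infty$, $(x,k,y)\mapsto(x,y)$, is continuous, for which I would show that the image $\pi(Z(\alpha,\beta))$ is relatively open in $\mathcal{R}_{\mathrm{tail}}$, or more directly that the preimage under $\pi$ of the subspace-basic open set $\mathcal{R}_{\mathrm{tail}}\cap\big(U(\gamma)\times U(\delta)\big)$ is open in $\mathcal{G}^1$.

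The key computation is to identify $\pi(Z(\alpha,\beta))=\big\{(\alpha x,\beta x):x\in E^\infty,\,s(x)=t(\alpha)=t(\beta)\big\}$ with the intersection $\mathcal{R}_{\mathrm{tail}}\cap\big(U(\alpha)\times U(\beta)\big)$. The inclusion ``$\subseteq$'' is clear since $\alpha x\in U(\alpha)$, $\beta x\in U(\beta)$, and the two share the common tail $x$ so are tail equivalent. For the reverse inclusion I would use Lemma \ref{lemma32}\ref{lemma32:ii}: if $(u,v)\in\mathcal{R}_{\mathrm{tail}}$ with $u\in U(\alpha)$ and $v\in U(\beta)$, then $u=\alpha x'$ and $v=\beta y'$ with $\sigma^\infty(x')=\sigma^\infty(u)=\sigma^\infty(v)=\sigma^\infty(y')$; the point is that $x'$ and $y'$ need not coincide, so this inclusion can fail in general. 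This is exactly where I expect the main obstacle to lie: a pair $(u,v)$ that is tail equivalent with $u\in U(\alpha)$, $v\in U(\beta)$ need \emph{not} be of the form $(\alpha x,\beta x)$ with a \emph{common} tail $x$, so $\pi(Z(\alpha,\beta))$ is in general strictly smaller than $\mathcal{R}_{\mathrm{tail}}\cap(U(\alpha)\times U(\beta))$.

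To resolve this I would abandon trying to match images exactly and instead verify continuity via preimages directly. Given a subspace-basic open set $\mathcal{R}_{\mathrm{tail}}\cap(U(\gamma)\times U(\delta))$, a triple $(x,k,y)\in\mathcal{G}^1$ lies in its preimage precisely when $x\in U(\gamma)$ and $y\in U(\delta)$. Using the neighborhood-basis description of $E^\infty$, I would fix such a triple, write $x=\gamma x''$ and $y=\delta y''$, and produce a basic open set $Z(\alpha,\beta)$ containing $(x,k,y)$ and contained in the preimage: taking $\alpha$ to be an initial segment of $x$ refining $\gamma$ and $\beta$ a correspondingly long initial segment of $y$ refining $\delta$, chosen long enough that $|\beta|-|\alpha|=k$ and that the common tail condition $s(x)=t(\alpha)=t(\beta)$ holds, every element of $Z(\alpha,\beta)$ then has first coordinate in $U(\gamma)$ and second in $U(\delta)$. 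Here Lemma \ref{lemma32}\ref{lemma32:iii} is what guarantees that such $\alpha,\beta$ with the prescribed length difference $k$ can be realized inside $\mathcal{G}^1$, since any two points with the same $\sigma^\infty$ are shift equivalent with \emph{every} lag. Assembling these $Z(\alpha,\beta)$ over all points of the preimage exhibits it as a union of basic open sets, completing the proof of continuity.
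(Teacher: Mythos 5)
Your final argument is correct and is essentially the paper's proof: both take the preimage of a basic open set $\{k\}\times\bigl(U(\gamma)\times U(\delta)\cap\mathcal{R}_{\mathrm{tail}}\bigr)$, and around each point $(x,k,y)$ of that preimage build a set $Z(\alpha,\beta)$ with $\alpha,\beta$ initial segments of $x,y$ extending $\gamma,\delta$ and satisfying $|\beta|-|\alpha|=k$, using Lemma \ref{lemma32} to get a common tail with the prescribed lag and to see that every element of $Z(\alpha,\beta)$ is again tail equivalent. The only wobble is the opening suggestion to show that \emph{images} of the $Z(\alpha,\beta)$ are open --- that would prove openness of the map rather than continuity --- but you correctly abandon it in favor of the preimage computation.
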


\begin{proof}
Recall the notations \eqref{eq:cylinderset} and \eqref{eq:basicopenZ}.
A basis for the topology of $\Z\times\mathcal{R}_{\mathrm{tail}}$ is given by sets of the form
\[
\{k\}\times \Big(U(\alpha)\times U(\beta)\cap\mathcal{R}_{\mathrm{tail}}\Big)
\]
with $k\in\Z$ and $\alpha,\beta$ finite paths. Let $V$ be the preimage of this set under the map \eqref{eq:wehaveabj}.

Let $(x,k,y)\in V$.
By Lemma \ref{lemma32}, $(x,y)\in\mathcal{R}_{\mathrm{tail}}$ implies $x\sim_ky$, that means
$x=\alpha'z$ and $y=\beta'z$ for some finite paths $\alpha'$ and $\beta'$ with $|\beta'|-|\alpha'|=k$, and some $z\in E^\infty$.
We can choose $\alpha'$ and $\beta'$ such that $|\alpha'|\geq |\alpha|$ and $|\beta'|\geq |\beta|$ (by adding to both of them arrows from $z$ we don't change the difference $|\beta'|-|\alpha'|$).
Clearly
\[
(x,k,y)\in Z(\alpha',\beta') .
\]
But $x\in U(\alpha)$ and $y\in U(\beta)$ which means that $\alpha'_i=\alpha_i$ for all $i\leq|\alpha|$ and $\beta'_i=\beta_i$ for all $i\leq |\beta|$.

Take any point $(x',k,y')\in Z(\alpha',\beta')$. By construction , $x'=\alpha'z'$ and $y'=\beta'z'$ for some $z'\in E^\infty$, which means that $x'\in U(\alpha')\subseteq U(\alpha)$ and $y'\in  U(\beta')\subseteq U(\beta)$. From Lemma \ref{lemma32} we also have $(x',y')\in\mathcal{R}_{\mathrm{tail}}$. Thus, $(x',k,y')\in V$. This proves that
\[
(x,k,y)\in Z(\alpha',\beta')\subseteq V .
\]
Any point $(x,k,y)$ in $V$ is internal, so $V$ is open. Since $V$ is the preimage of an arbitrary basic open set, this proves that the map \eqref{eq:wehaveabj} is continuous.
\end{proof}

In general, the map \eqref{eq:wehaveabj} is not a homeomorphism: if we identify the underlying sets $\mathcal{G}^1$ and $\Z\times\mathcal{R}_{\mathrm{tail}}$, the former has a finer topology than the latter.
This happens for example with quantum spheres (see e.g.~Remark \ref{rem:finer} for $S^3_q$).

\smallskip

In the context of Lemma \ref{lemma32}, let $x\in E^\infty$ be a fixed point of $\sigma$. If $\sigma^\infty(y)=x$ for all $y$ in a neighborhood of $x$, we say that
$x$ is a \emph{stable equilibrium point} (and the maximal neighborhood of $x$ with this property is called its \emph{basin of attraction}). If $\sigma^\infty(y)\neq x$ for all $y$ in a neighborhood of $x$ with $y\neq x$,
we will say that $x$ is an \emph{unstable equilibrium point}.

\smallskip

We now specialize the discussion to the graph in Figure \ref{fig:sphereB} and its graph groupoid.
It follows from Theorem \ref{thm:isoEG} and the appendix of \cite{HS02} that the corresponding groupoid C*-algebra is isomorphic to $C(S^{2n+1}_q)$.
If $n=0$, the graph consists of a single loop
\begin{center}
\begin{tikzpicture}[>=stealth,node distance=2cm,
main node/.style={circle,inner sep=2pt},
freccia/.style={->,shorten >=2pt, shorten <=2pt},
ciclo/.style={out=50, in=130, loop, distance=2cm},
font=\small]

      \node[main node] (1) {};

      \filldraw (1) circle (0.06);

      \path[freccia] (1) edge[ciclo] (1);
\end{tikzpicture}
\end{center}
There is a unique right-infinite path in this graph, consisting in infinitely many loops around the only vertex of the graph. The graph groupoid is the group $\Z$, and its groupoid C*-algebra is $C^*(\Z)\cong C(S^1)$ as expected. In the following sections we study the case $n\geq 1$, starting with a detailed analysis of the path space.

\section{The path space of a quantum sphere}\label{sec:viathemap}
In this section $n\geq 1$, $E$ is the graph in Figure~\ref{fig:sphereB}, and $\mathcal{G}$ is the corresponding graph groupoid. Recall that the vertex set is $E^0=\{1,\ldots,n+1\}$, and $E^1=\{\ell_1,\ldots,\ell_{n+1},r_1,\ldots,r_n\}$ consists of a loop $\ell_i$ at each vertex $i$,
and an edge $r_i$ from the vertex $i$ to the vertex $i+1$ for each $1\leq i\leq n$. We shall write a path $(x_1,x_2,x_3,\ldots)$ as a concatenation $x_1x_2x_3\cdots$ to save space.

Let $E^\infty_\ell\subset E^\infty$ be the subset of right-infinite paths whose first edge is a loop.
The map
\begin{equation}\label{eq:oneloop}
E^\infty\longrightarrow E^\infty_\ell , \qquad
x_1x_2x_3\cdots \longmapsto \ell_{s(x_1)}x_1x_2x_3\cdots ,
\end{equation}
which adds a loop at the beginning of the sequence, is a bijection with inverse
\[
\sigma:E^\infty_\ell \longrightarrow E^\infty
\]
given by the restriction to $E^\infty_\ell$ of the shift \eqref{eq:localhom}. In fact, \eqref{eq:oneloop} is a homeomorphism, since it transforms basic open sets \eqref{eq:cylinderset} into basic open sets.

Let $\overline{\N}^n_{\nearrow}$ denote the topological subspace of $\overline{\N}^n$ consisting of increasing tuples:
\[
\overline{\N}^n_{\nearrow}:=\big\{\mv{k}:=(k_1,\ldots,k_n)\in\overline{\N}^n:k_1\leq k_2\leq\ldots\leq k_n\big\} .
\]
We define a map
\begin{equation}\label{eq:mapF}
\Phi:E^\infty_\ell\longrightarrow\overline{\N}^n_{\nearrow}
\end{equation}
as follows. For any $x\in E_\ell^\infty$ and $1\leq i\leq n$, the integer $\Phi(x)_i$ counts the number of loops in the path $x$ before reaching the vertex $i+1$.
Let us give an explicit formula for $\Phi$.

Any $x\in E^\infty_\ell$  starts with a loop at some vertex $i_0$, does a number of loops around each vertex and then possibly moves forward, and ends with a tail consisting of infinitely many loops around some other vertex $i_1$.
Thus,
\begin{equation}\label{eq:foftheform}
x=\ell_{i_0}^{m_{i_0}}r_{i_0}\ell_{i_0+1}^{m_{i_0+1}}r_{i_0+1}\ell_{i_0+2}^{m_{i_0+2}}\ldots \ell_{i_1-1}^{m_{i_1-1}}r_{i_1-1}\ell_{i_1}^\infty
\end{equation}
for some $1\leq i_0\leq i_1\leq n+1$, $m_{i_0},\ldots,m_{i_1-1}\in\N$. Here, $\ell_{i_1}^\infty$ means the loop $\ell_{i_1}$ repeated infinitely many times.
For $x$ as in \eqref{eq:foftheform},
\begin{equation}\label{eq:Phiofx}
\Phi(x)_i=\begin{cases}
0 & \text{if }i<i_0 , \\
\sum_{j=i_0}^im_j , & \text{if }i_0\leq i<i_1 \\
+\infty & \text{if }i\geq i_1 .
\end{cases}
\end{equation}
To simplify the discussion, we now define two maps
\[
\iota,\varepsilon:\overline{\N}^n\to\{1,\ldots,n+1\}
\]
as follows. Given any $\mv{k}=(k_1,\ldots,k_n)\in\overline{\N}^n$ set $k_{n+1}:=+\infty$ and define
\begin{equation}\label{eq:iotae}
\iota(\mv{k}):=\min\big\{i:k_i\neq 0\big\} , \qquad\quad
\varepsilon(\mv{k}):=\min\big\{i:k_i=+\infty\big\} .
\end{equation}
Observe that, thanks to $k_{n+1}=+\infty$, both maps are well-defined. We will call $\iota(\mv{k})$ the \emph{index} and
$\varepsilon(\mv{k})$ the  \emph{anchor} of $\mv{k}$, respectively.
If $x\in E^\infty_\ell$ and $\mv{m}=\Phi(x)$, then $i_0=\iota(\mv{m})$ coincides with the source of $x$, and $i_1=\varepsilon(\mv{m})$ is the vertex around which $x$ does infinitely many loops.

Clearly the map \eqref{eq:Phiofx} is bijective. The path $x$ can be uniquely reconstructed from the knowledge of the index $i_0$ and anchor $i_1$ of $\Phi(x)$ (i.e.\ the vertices where $x$ starts and ends with infinitely many loops, respectively), and from the number of loops $\Phi(x)_i-\Phi(x)_{i+1}$ around each vertex $i$, for $i_0\leq i<i_1$. Since we can loop around the first $n$ vertices as many times as we want, every tuple in $\overline{\N}^n_{\nearrow}$ is in the image of $\Phi$.

\begin{ex}\label{ex:five}
For $n=5$ the graph is:\vspace{-2pt}
\begin{center}
\begin{tikzpicture}[>=stealth,
every picture/.style=semithick,
node distance=2cm,
main node/.style={circle,inner sep=1pt,fill=gray!20},
freccia/.style={->,shorten >=2pt,shorten <=2pt},
ciclo/.style={out=50, in=130, loop, distance=2cm, ->},
inner sep=1pt,font=\small]

\node[main node] (1) {$1$};
\node (2) [main node,right of=1] {$2$};
\node (3) [main node,right of=2] {$3$};
\node (4) [main node,right of=3] {$4$};
\node (5) [main node,right of=4] {$5$};
\node (6) [main node,right of=5] {$6$};

\foreach \k in {1,...,6} \path[freccia] (\k) edge[ciclo] node[below=2pt] {$\ell_{\k}$} (\k);

\path[freccia]
	(1) edge node[above=2pt] {$r_1$} (2)
	(2) edge node[above=2pt] {$r_2$} (3)
	(3) edge node[above=2pt] {$r_3$} (4)
	(4) edge node[above=2pt] {$r_4$} (5)
	(5) edge node[above=2pt] {$r_5$} (6);

\end{tikzpicture}\vspace{-3pt}
\end{center}
One computes
\begin{gather*}
\Phi\big(\,
\ell_2^2r_2r_3\ell_4^{\infty}
\,\big)
=(0,2,2,\infty,\infty)
\\
\Phi\big(\,
\ell_6^{\infty}
\,\big)
=(0,0,0,0,0) 
\\
\Phi\big(\,
\ell_3r_3\ell_4r_4\ell_5^{\infty}
\,\big)
=(0,0,1,2,\infty)
 \\
\Phi\big(\,
\ell_1r_1r_2r_3\ell_4r_4r_5\ell_6^{\infty}
\,\big)
=(1,1,1,2,2)
\\
\Phi\big(\,
\ell_2^{\infty}
\,\big)
=(0,\infty,\infty,\infty,\infty)
\end{gather*}
\end{ex}

\begin{ex}\label{ex:SUq2}
Let us consider the case $n=1$. In this case, the graph is:
\begin{center}
\begin{tikzpicture}[>=stealth,
every picture/.style=semithick,
node distance=2cm,
main node/.style={circle,inner sep=1pt,fill=gray!20},
freccia/.style={->,shorten >=2pt,shorten <=2pt},
ciclo/.style={out=50, in=130, loop, distance=2cm, ->},
inner sep=1pt,font=\small]

\node[main node] (1) {$1$};
\node (2) [main node,right of=1] {$2$};

\foreach \k in {1,2} \path[freccia] (\k) edge[ciclo] node[below=2pt] {$\ell_{\k}$} (\k);

\path[freccia]
	(1) edge node[above=2pt] {$r_1$} (2);

\end{tikzpicture}
\end{center}
Denote by
\[
\Psi:E^\infty\to\overline{\N}
\]
the composition of \eqref{eq:oneloop} and \eqref{eq:mapF}. Explicitly, for every $m\in\N$,
\[
\Psi(\ell_2^\infty):=0 , \qquad
\Psi(\ell_1^mr_1\ell_2^\infty):=m+1 , \qquad 
\Psi(\ell_1^\infty):=+\infty .
\]
Thus, $\Psi$ counts the number of edges in the path before reaching the vertex $2$, i.e.~$\Psi(x)$ is the length of the subpath obtained by removing from $x$ all the loops around the vertex $2$ (if any).
\end{ex}

\begin{prop}
The map $\Phi$ in \eqref{eq:mapF} is a homeomorphism.
\end{prop}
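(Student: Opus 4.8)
The plan is to reduce the statement to the continuity of $\Phi$ alone, via a compactness argument. The map $\Phi$ has already been shown to be a bijection in the discussion preceding the proposition, so it suffices to exhibit both spaces as compact Hausdorff and then invoke the fact that a continuous bijection between compact Hausdorff spaces is automatically a homeomorphism. First I would observe that $E^\infty_\ell=\bigcup_{i=1}^{n+1}U(\ell_i)$ is a finite union of cylinder sets, hence clopen in $E^\infty$; since $E^\infty$ is compact Hausdorff, so is its closed subspace $E^\infty_\ell$. On the target side, $\overline{\N}^n$ is compact Hausdorff as a finite product of one-point compactifications, and $\overline{\N}^n_{\nearrow}$ is the subspace cut out by the inequalities $k_1\leq\cdots\leq k_n$. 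To see this subspace is closed it is enough to check that $\{(a,b)\in\overline{\N}^2:a\leq b\}$ is closed: if $a>b$ then necessarily $b\in\N$, and the product of the isolated point $\{b\}$ with a suitable neighborhood of $a$ avoids the relation. Thus $\overline{\N}^n_{\nearrow}$ is compact Hausdorff as well, and the whole proposition comes down to proving that $\Phi$ is continuous.

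To prove continuity I would fix $x\in E^\infty_\ell$ and a basic open neighborhood $V$ of $\mv{k}:=\Phi(x)$. A basis for $\overline{\N}$ consists of the singletons $\{m\}$ with $m\in\N$ together with the neighborhoods $\{m,m+1,\ldots\}\cup\{+\infty\}$ of $+\infty$, so $V$ is the intersection with $\overline{\N}^n_{\nearrow}$ of a product $V_1\times\cdots\times V_n$ in which each $V_i$ is of one of these two types. The key point is that the $i$-th coordinate $\Phi(\cdot)_i$, which counts the loops traversed before first reaching the vertex $i+1$, is determined by a long enough finite prefix of the path. I would therefore bound, coordinate by coordinate, the prefix length needed to force the $i$-th component into $V_i$, and then take the maximum.

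The case analysis is the technical core. It is convenient to note that $\Phi(x)_i$ equals the total number of loops occurring at vertices $\leq i$, which is finite precisely when $x$ eventually sits forever at a vertex $>i$ (recall that along any path in this graph the visited vertices are non-decreasing). If $V_i=\{m\}$ with $k_i=m<\infty$, I choose $j_i$ so that the prefix $(x_1,\ldots,x_{j_i})$ has already reached a vertex $>i$; since from that point on no further loop at a vertex $\leq i$ can occur, every path in $U(x_1,\ldots,x_{j_i})$ has exactly $m$ loops at vertices $\leq i$, hence $i$-th coordinate equal to $m$. If instead $k_i=+\infty$, i.e.\ $i\geq\varepsilon(\mv{k})$, then $x$ contains infinitely many loops at vertices $\leq i$, and I choose $j_i$ so that the prefix already contains at least $m$ of them, where $V_i=\{m,m+1,\ldots\}\cup\{+\infty\}$; these loops are counted by $\Phi(\cdot)_i$ for every extension, forcing $\Phi(x')_i\geq m$ for all $x'$ in the cylinder. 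Setting $j:=\max_i j_i$ gives $U(x_1,\ldots,x_j)\subseteq\Phi^{-1}(V)$, so $\Phi^{-1}(V)$ is open and $\Phi$ is continuous.

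Finally, combining continuity with the already established bijectivity and the compactness of both spaces yields that $\Phi$ is a homeomorphism. I expect the main obstacle to be the bookkeeping in the infinite-component case: one must verify carefully that the loops at vertices $\leq i$ are exactly those counted by $\Phi(\cdot)_i$, and that the monotone vertex structure of paths guarantees they all occur before the vertex $i+1$ is reached, so that extending the prefix can only increase that count. The closedness of $\overline{\N}^n_{\nearrow}$ is a minor point but worth stating explicitly, since compactness of the target is precisely what makes this short proof possible.
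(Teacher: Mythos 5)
Your proof is correct and follows essentially the same strategy as the paper's: reduce to continuity via the compact-to-Hausdorff lemma (bijection, compact domain, Hausdorff codomain), then verify continuity directly against the cylinder-set basis. The only difference is organizational --- the paper splits into cases according to the anchor of the path and exhibits the preimage $\Phi^{-1}(V_p)$ as a single explicit cylinder set, whereas you bound the required prefix length coordinate by coordinate --- but the substance is the same.
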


\begin{proof}
Since $\Phi$ is bijective, its domain is compact and its codomain is Hausdorff, it is enough to show that $\Phi$ is continuous.
Recall that the topology of $E^\infty$ is generated by the sets \eqref{eq:cylinderset}.

Let $x=\alpha\ell_{n+1}^\infty$, where $\alpha$ is a finite path starting with a loop and with $t(\alpha)=n+1$.
Then $U(\alpha)$ is a singleton, since the only way to extend $\alpha$ to a right-infinite path is by adding infinitely many loops around the vertex $n+1$. In this case, $\{x\}$ is open, and $\Phi$ is continuous at $x$.

Next, let $x=\alpha r_{i_1-1}\ell_{i_1}^\infty$ with $i_1\leq n$ and $t(\alpha)=i_1-1$. Then, $\mv{k}:=\Phi(x)$ is of the form
\begin{equation}\label{eq:riciclaA}
\mv{k}=(k_1,\ldots,k_{i_1-1},+\infty,\ldots,+\infty)
\end{equation}
with $k_1,\ldots,k_{i_1-1}$ finite. A neighborhood basis of $\mv{k}$ in $\overline{\N}^n_{\nearrow}$ is given by the sets
\begin{equation}\label{eq:riciclaB}
V_p:=\Big(\{k_1\}\times\ldots\times\{k_{i_1-1}\}\times\interval{p}{+\infty}^{n-i_1+1} \Big)\cap\,\overline{\N}^n_{\nearrow}
\end{equation}
with $p\geq k_{i_1-1}$. A path in $\Phi^{-1}(V_p)$ loops around each vertex from $1$ to $i_1-1$ as many times as $\alpha$,
loops around the vertex $i_1$ at least $p-k_{i_1-1}$ times, and does an arbitrary number of loops around the other vertices. Thus,
\[
\Phi^{-1}(V_p)=\big\{ \alpha\ell_{i_1}^{p-k_{i_1-1}} y:y\in E^\infty \big\}=U\big(\alpha\ell_{i_1}^{p-k_{i_1-1}}\big) .
\]
This set is clearly open, hence $\Phi$ is continuous.
\end{proof}

We already saw three different realizations of the same topological space, as $E^\infty$, as $E^\infty_\ell$, and as
$\overline{\N}^n_{\nearrow}$. We now describe one more realization that will be useful later on to compare the path groupoid of $E$ with the groupoid in \cite{She97bis,She97}.

\smallskip

Define the following equivalence relation $\mathcal{R}_\infty$ on $\overline{\N}^n$:
\begin{equation}\label{eq:Rinfinity}
(\mv{m},\mv{m}')\in\mathcal{R}_\infty \iff \varepsilon(\mv{m})=\varepsilon(\mv{m}')=:i_1\text{ and }m_i=m'_i\;\forall\;i<i_1.
\end{equation}
Equip $\overline{\N}^n\!/\mathcal{R}_\infty$ with the quotient topology. Then,

\begin{prop}\label{prop:morup}
The map
\begin{equation}\label{eq:partialsums}
\overline{\N}^n\longrightarrow\overline{\N}^n_{\nearrow}, \qquad
\mv{m}\longmapsto (m_1,m_1+m_2,m_1+m_2+m_3,\ldots) ,
\end{equation}
induces a homeomorphism between $\overline{\N}^n\!/\mathcal{R}_\infty$ and $\overline{\N}^n_{\nearrow}$.
\end{prop}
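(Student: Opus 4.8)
The plan is to realize the map \eqref{eq:partialsums} as a map whose fibers are exactly the classes of $\mathcal{R}_\infty$, and then promote the resulting continuous bijection to a homeomorphism by a compactness argument. Write $\pi$ for the partial-sums map \eqref{eq:partialsums}. First note that $\pi$ really does land in $\overline{\N}^n_{\nearrow}$, since partial sums of non-negative entries are non-decreasing (using the convention $k+\infty=+\infty$).

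The first step is to identify the fibers of $\pi$ with the equivalence classes of $\mathcal{R}_\infty$. The key observation is that $\pi$ preserves the anchor: if $i_1=\varepsilon(\mv{m})$, then $m_1,\ldots,m_{i_1-1}$ are finite while $m_{i_1}=+\infty$, so the $i$-th partial sum is finite for $i<i_1$ and equals $+\infty$ for $i\ge i_1$; hence $\varepsilon(\pi(\mv{m}))=\varepsilon(\mv{m})$. Writing $\mv{k}=\pi(\mv{m})$, the entries below the anchor satisfy $m_1=k_1$ and $m_i=k_i-k_{i-1}$ for $2\le i<i_1$, so they are determined by $\mv{k}$, whereas the entries at and above the anchor are unconstrained beyond $m_{i_1}=+\infty$. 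This shows that $\pi(\mv{m})=\pi(\mv{m}')$ holds precisely when $\varepsilon(\mv{m})=\varepsilon(\mv{m}')$ and $\mv{m},\mv{m}'$ agree below the anchor, i.e.\ exactly when $(\mv{m},\mv{m}')\in\mathcal{R}_\infty$. Surjectivity onto $\overline{\N}^n_{\nearrow}$ is immediate: given an increasing $\mv{k}$ with anchor $i_1$, the tuple with entries $k_1,k_2-k_1,\ldots,k_{i_1-1}-k_{i_1-2},+\infty,0,\ldots,0$ is a preimage. Consequently $\pi$ descends to a bijection $\overline{\pi}\colon\overline{\N}^n/\mathcal{R}_\infty\to\overline{\N}^n_{\nearrow}$.

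The second step is continuity. I would first check that addition $\overline{\N}\times\overline{\N}\to\overline{\N}$ is continuous for the one-point-compactification topology: the preimage of any subset of $\N$ is open since points of $\N$ are isolated, and the preimage of a finite set $F\subseteq\N$ is a finite (hence closed, as $\overline{\N}\times\overline{\N}$ is compact Hausdorff) subset of $\N\times\N$, being contained in $\{(a,b):a+b\le\max F\}$. Iterating, each component of $\pi$ is continuous, so $\pi$ is continuous into $\overline{\N}^n$ and hence into the subspace $\overline{\N}^n_{\nearrow}$. By the universal property of the quotient topology, the identity $\overline{\pi}\circ q=\pi$, with $q$ the quotient map, forces $\overline{\pi}$ to be continuous.

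Finally, $\overline{\N}^n$ is compact as a finite product of compact spaces, so its quotient $\overline{\N}^n/\mathcal{R}_\infty$ is compact, while $\overline{\N}^n_{\nearrow}$ is Hausdorff as a subspace of the Hausdorff space $\overline{\N}^n$. A continuous bijection from a compact space to a Hausdorff space is a homeomorphism, so $\overline{\pi}$ is the desired homeomorphism. The step requiring genuine care is the continuity of addition on $\overline{\N}$, where the convention $k+\infty=+\infty$ must be reconciled with the compactification topology; the fiber computation is the conceptual heart but reduces to telescoping bookkeeping once anchor-preservation is noted.
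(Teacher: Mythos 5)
Your proof is correct and follows the same overall strategy as the paper's: identify the fibers of the map \eqref{eq:partialsums} with the $\mathcal{R}_\infty$-classes to get a well-defined bijection on the quotient, prove continuity, and conclude by the compact-to-Hausdorff lemma. The fiber computation (anchor preservation plus telescoping to recover the entries below the anchor) and the endgame are essentially the paper's. Where you genuinely diverge is the continuity step. The paper verifies continuity of the induced map pointwise: classes of tuples with all entries finite are open singletons, and for $\mv{m}$ with anchor $i_1\leq n$ it exhibits, for each basic neighborhood $V_p$ of the image point in $\overline{\N}^n_{\nearrow}$ (the sets \eqref{eq:riciclaB}), an explicit saturated open set $W_p$ containing $\mv{m}$ whose image lies in $V_p$. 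You instead prove that the extended addition $\overline{\N}\times\overline{\N}\to\overline{\N}$ is continuous --- your argument is sound: preimages of subsets of $\N$ lie in the discrete set $\N\times\N$, and preimages of finite subsets of $\N$ are finite, hence closed (Hausdorffness alone suffices here; compactness is not needed) --- then deduce continuity of \eqref{eq:partialsums} componentwise and pass to the quotient via its universal property. Your route is more modular and spares you from describing neighborhood bases in $\overline{\N}^n_{\nearrow}$ or checking saturation by hand; the paper's more hands-on route has the side benefit that the explicit neighborhoods and the saturation bookkeeping are of the same kind reused later in the analysis of the groupoid topology. Either argument is complete.
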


\begin{proof}
Consider the following picture
\[
\begin{tikzpicture}[>=To,xscale=3,yscale=2]

\node (a) at (0,1) {$\overline{\N}^n$};
\node (b) at (1,1) {$\overline{\N}^n_{\nearrow}$};
\node (c) at (0,0) {$\overline{\N}^n\!/\mathcal{R}_\infty$};

\draw[font=\footnotesize,->]
	(c) edge[dashed] node[below right,pos=0.45] {$\widetilde{F}$} (b)
	(a) edge node[above] {$F$} (b)
	(a) edge node[left,pos=0.45] {$\pi$} (c);

\end{tikzpicture}
\]
where $F$ is the map \eqref{eq:partialsums} and $\pi$ is the quotient map.
We first prove that $F$ and $\pi$ have the same fibers, so that $\widetilde{F}$ is well defined and bijective.

Let $\mv{m},\mv{m}'\in\overline{\N}^n$ have anchors $i$ and $i'$, respectively.

If $[\mv{m}']=[\mv{m}]$, then for all $1\leq j\leq n$ we have
\begin{equation}\label{eq:infinf} 
F(\mv{m})_j=\sum_{k=1}^jm_j=\sum_{k=1}^jm'_j=F(\mv{m}')_j .
\end{equation}
If $j<i=i'$ this follows from $m'_k=m_k$ for all $k\leq j<i$. If $j\geq i$ this follows from $m_i=m_i'=+\infty$, which makes both sums in \eqref{eq:infinf} equal to $+\infty$.

Conversely, suppose that $F(\mv{m})=F(\mv{m}')$ and assume, without loss of generality, that $i\geq i'$.
Then $m_1=F(\mv{m})_1=F(\mv{m}')_1=m'_1$. For each $2\leq j<i'$,
\[
m_j=F(\mv{m})_j-F(\mv{m})_{j-1}=F(\mv{m}')_j-F(\mv{m}')_{j-1}=m'_j .
\]
Finally, since $F(\mv{m})_{i'}=F(\mv{m}')_{i'}=m'_{i'}+F(\mv{m}')_{j-1}=+\infty$ and $F(\mv{m})_{i'-1}$ is finite, it must be
$m_{i'}=+\infty$. Thus, $i=i'$ and $[\mv{m}]=[\mv{m}']$.

Next, we show that $\widetilde{F}$ is a homeomorphism. Since $\overline{\N}^n$ is compact and Hausdorff, its quotient $\overline{\N}^n\!/\mathcal{R}_\infty$ is compact and its subspace $\overline{\N}^n_{\nearrow}$ is Hausdorff. By the compact-to-Hausdorff lemma, it is enough to show that $\widetilde{F}$ is continuous, and we will do it pointwise.

If $\mv{m}\in\N^n$, its class $[\mv{m}]$ is a singleton, and then it is open in $\overline{\N}^n\!/\mathcal{R}_\infty$ and $\widetilde{F}$ is continuous at $\mv{m}$.

Now, assume that $\mv{m}\in\overline{\N}^n$ has anchor $i_1\leq n$, and call $\mv{k}:=F(\mv{m})$.
Thus, $\mv{k}$ is of the form \eqref{eq:riciclaA} and has neighborhood basis $\big\{V_p:p\geq k_{i_1-1}\big\}$,
with $V_p$ given by \eqref{eq:riciclaB}.
For each $p\geq k_{i_1-1}$, we must show the existence of an open neighborhood $U_p$ of $[\mv{m}]$ such that 
$\widetilde{F}(U_p)\subseteq V_p$. Equivalently, we must show the existence of a saturated open set $W_p\subseteq\overline{\N}^n$ containing $\mv{m}$ and such that $F(W_p)\subseteq V_p$ (and then we take $U_p:=\pi(W_p)$). 
Choose
\begin{equation}\label{eq:opensat}
W_p:=\{m_1\}\times\ldots\times\{m_{i_1-1}\}\times\interval{p-k_{i_1-1}}{+\infty}\times\overline{\N}^{n-i_1} .
\end{equation}
Clearly $W_p$ is open in $\overline{\N}^n$ and $F(W_p)\subseteq V_p$. It remains to show that $W_p$ is saturated.
Take
\[
\mv{m}'=(m_1,\ldots,m_{i_1-1},m'_{i_1},\ldots,m'_n)\in W_p
\]
(thus $m'_{i_1}\geq p-k_{i_1-1}$ while the other $m'$s are arbitrary) and let $\mv{m}''\in\overline{\N}^n$ such that $(\mv{m}',\mv{m}'')\in\mathcal{R}_\infty$.
Then $m''_j=m'_j=m_j$ for all $j<i_1$. Moreover, $m_i''=m'_i\geq p-k_{i_1-1}$ both when $m_{i_1}'$ and $m''_{i_1}$ are finite and when they are infinite.
This proves that $\mv{m}''\in  W_p$, and the set \eqref{eq:opensat} is saturated.
\end{proof}

\section{The groupoid of the quantum $SU(2)$ group}\label{sec:suq2}
Now that we have a nice description of the path space, we pass to the graph groupoid of a quantum sphere $S^{2n+1}_q$. As a warm up, we start with the case $n=1$, i.e.~the quantum space underlying the quantum group $SU_q(2)$. We want to establish an isomorphism between the groupoid of the graph in Example \ref{ex:SUq2} and the groupoid in \cite{She97bis}.

We will use the map $\Psi$ in Example \ref{ex:SUq2} to identify the path space $E^\infty$ with $\overline{\N}$. Under this identification, the local homeomorphism in \eqref{eq:localhom} becomes the map $\sigma:\overline{\N}\to\overline{\N}$ given by
\[
\sigma(m)=\begin{cases}
0 & \text{if } m=0 ,\\
m-1 & \text{if } 1\leq m<+\infty ,\\
+\infty & \text{if } m=+\infty .
\end{cases}
\]
We see that the dynamical system $(\overline{\N},\sigma)$ has two fixed points, $0$ and $+\infty$. The first one is a stable equilibrium point: every $m\in\N$ ends at $0$ after a sufficiently long time. The second one is an unstable equilibrium point: every neighborhood of $+\infty$ contains a point in the basin of attraction of $0$. In particular, this dynamical system is not asymptotically stable: a small perturbation of $+\infty$ changes the long time limit of the dynamics.

We are in the situation illustrated in Lemma \ref{lemma32}. Two paths are shift equivalent if and only if they have the same tail.
Under the above identification, $m,m'\in\overline{\N}$ are tail equivalent if they are both finite, or they are both infinite.

Let $\mathcal{G}$ be the graph groupoid.
We can identify $\mathcal{G}^1$ with $\Z\times\mathcal{R}_{\mathrm{tail}}$ using \eqref{eq:wehaveabj}. Here
\[
\mathcal{R}_{\mathrm{tail}}=\N^2\cup\{(+\infty,+\infty)\} .
\]
The graph groupoid is
\begin{equation}\label{eq:identification}
\mathcal{G}^1=\Z\times\mathcal{R}_{\mathrm{tail}}
\;\mathop{\rightrightarrows}\;
\mathcal{G}^0 = \overline{\N} .
\end{equation}
The topology of $\mathcal{G}^0$ is the obvious one, while the topology on $\mathcal{G}^1$ is described in the next lemma.

\begin{lemma}\label{Zjk}
A basis for the topology of $\mathcal{G}^1$ is given by the singletons in $\Z\times\N^2$, and by the sets
\[
Z_{k,m}:=\big\{ (-k,j+k,j): j\in\overline{\N}, j\geq m \big\}
\]
for all $k\in\Z$ and $m\in\N$ such that $m+k\geq 0$.
\end{lemma}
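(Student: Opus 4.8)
The plan is to transport everything to the coordinates $\Z\times\mathcal{R}_{\mathrm{tail}}$ and exploit the fact that, by construction, the sets $Z(\alpha,\beta)$ of \eqref{eq:basicopenZ} form a basis for the topology of $\mathcal{G}^1$. Under the bijection $(x,k,y)\mapsto(k,\Psi(x),\Psi(y))$ obtained from $\Psi$ and the identification \eqref{eq:wehaveabj}, the image of a basic set is
\[
\big\{\,(|\beta|-|\alpha|,\ \Psi(\alpha z),\ \Psi(\beta z)) : z\in E^\infty,\ s(z)=t(\alpha)=t(\beta)\,\big\}.
\]
Since a bijection carries a basis to a basis, it suffices to compute these images and match them against the collection in the statement. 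As $t(\alpha)=t(\beta)\in\{1,2\}$, I would split the computation into two cases according to the common target vertex.

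If the common target is the vertex $2$, the only right-infinite path with source $2$ is $\ell_2^\infty$, so $z$ is forced and $Z(\alpha,\beta)$ is a single point $(|\beta|-|\alpha|,\Psi(\alpha\ell_2^\infty),\Psi(\beta\ell_2^\infty))$ with both last entries in $\N$. Conversely, every finite path ending at $2$ has the form $\ell_1^p r_1\ell_2^q$ or $\ell_2^q$, so its length can be made arbitrarily large by trailing loops $\ell_2^q$ without changing its $\Psi$-value; hence every prescribed $(k,a,b)\in\Z\times\N^2$ is realized, and these images are exactly the singletons in $\Z\times\N^2$. If instead the common target is the vertex $1$, then both $\alpha$ and $\beta$ can only be powers of the loop, say $\alpha=\ell_1^a$ and $\beta=\ell_1^b$, because one cannot return to $1$ after using $r_1$. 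Writing $k=a-b$ and $m=b+1$, and using that prepending $\ell_1^a$ adds $a$ to the $\Psi$-value while $\Psi(z)\ge 1$ for every finite $z$ with source $1$ (and $\Psi(z)=+\infty$ only for $z=\ell_1^\infty$), I expect the image of $Z(\ell_1^a,\ell_1^b)$ to be exactly $Z_{k,m}$. Letting $a,b$ range over $\N$ then produces precisely the sets $Z_{k,m}$ with $m\ge 1$ and $k+m\ge 1$.

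The two cases together show that the images of the basic sets form the basis $\mathcal{B}_0$ consisting of the singletons in $\Z\times\N^2$ together with the $Z_{k,m}$ for which $m\ge 1$ and $k+m\ge 1$. It remains to reconcile $\mathcal{B}_0$ with the slightly larger collection $\mathcal{B}$ in the statement, which also allows the boundary values $m=0$ and $k+m=0$. For this I would use the identity
\[
Z_{k,m}=Z_{k,m+1}\cup\big\{(-k,m+k,m)\big\},
\]
valid whenever $m\in\N$ and $m+k\ge 0$: here $Z_{k,m+1}$ belongs to $\mathcal{B}_0$ (since $m+1\ge 1$ and $k+m+1\ge 1$) and the extra point is a singleton in $\Z\times\N^2$ (since $m,\,m+k\in\N$), so every $Z_{k,m}$ in $\mathcal{B}$ is open. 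As a family of open sets containing the basis $\mathcal{B}_0$, the collection $\mathcal{B}$ is itself a basis, which is the assertion. The step requiring the most care is the explicit bookkeeping of $\Psi$ on concatenations, together with the sign of the lag $|\beta|-|\alpha|=-k$, in the two cases; the boundary sets with $m=0$ or $k+m=0$ are a minor but genuine wrinkle, since no single $Z(\alpha,\beta)$ realizes them and one must exhibit them as unions as above.
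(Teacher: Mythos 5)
Your proposal is correct and follows essentially the same route as the paper: transport the basic sets $Z(\alpha,\beta)$ through $\Psi$ and \eqref{eq:wehaveabj}, split according to whether the common target of $\alpha,\beta$ is the vertex $2$ (yielding exactly the singletons of $\Z\times\N^2$) or the vertex $1$ (yielding the sets $Z_{k,m}$ with $m\geq 1$ and $m+k\geq 1$), and then absorb the boundary cases via $Z_{k,m}=Z_{k,m+1}\cup\{(-k,m+k,m)\}$. The only difference is cosmetic: you make explicit the final observation that a family of open sets containing a basis is itself a basis, which the paper leaves implicit.
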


\begin{proof}
The topology of the graph groupoid is generated by the sets $Z(\alpha,\beta)$ in \eqref{eq:basicopenZ}.
 
Let $\alpha,\beta$ be finite paths that end at the vertex $2$. This means that $\alpha$ consists of $a\in\N$ edges before reaching the vertex $2$, and $a'\in\N$ loops around the vertex $2$ ($\alpha$ is a vertex if $a=a'=0$).
Similarly for $\beta$, with $b$ the number of edges before the vertex $2$ and $b'$ the number of loops around $2$. The only right-infinite path that we can attach to $\alpha$ or $\beta$ is the infinite cycle $\ell_2^\infty$.
The corresponding set $Z(\alpha,\beta)$ is a singleton, and under the identification \eqref{eq:identification}
becomes the point
\[
\{ (b+b'-a-a' ,a, b) \}
\]
In this way we can get any point in $\Z\times\N^2$, hence these points are all open in $\mathcal{G}^1$.

Next, suppose $\alpha,\beta$ are finite paths that end at the vertex $1$, which means $\alpha=\ell_1^a$ and $\beta=\ell_1^b$
for some $a,b\in\N$ (with $a=0$ resp.~$b=0$ corresponding to a path of length $0$, i.e.~the vertex $1$).
Since we can attach to them any path of the form $\ell_1^pr_1\ell_2^\infty$ or $\ell_1^\infty$, we get
\[
Z(\alpha,\beta)=\big\{ (\ell_1^{a+p}r_1\ell_2^\infty,b-a,\ell_1^{b+p}r_1\ell_2^\infty): p\in \N \big\} \cup 
\big\{ (\ell_1^\infty,b-a,\ell_1^\infty) \big\}.
\]
Under the above identification, this becomes the set
\[
\big\{ (b-a,a+p+1,b+p+1): p\in\overline{\N} \big\} 
\]
With the relabeling $k=a-b$, $j=b+p+1$ and $m=b+1$, we get the set $Z_{k,m}$
for every $k\in\Z$ and $m\in\Z_+$ such that $m+k\geq 1$.
On the other hand, for every $k\in\Z$ and $m\in\N$ such that $m+k\geq 0$, one has
\[
Z_{k,m}=Z_{k,m+1}\cup\big\{ (-k,m+k,m) \big\}
\]
and since the sets on the right hand side are both open, $Z_{k,m}$ is open as well.
\end{proof}

\begin{rem}\label{rem:finer}
The topology of $\mathcal{G}^1$ is strictly finer than the standard topology of $\Z\times\mathcal{R}_{\mathrm{tail}}$.
Indeed, the set $Z_{k,m}$ of Lemma \ref{Zjk} is not open in the standard topology of $\Z\times\mathcal{R}_{\mathrm{tail}}$.
By contradiction, if it were, the point $(-k,+\infty,+\infty)$ should be an interior point of $Z_{k,m}$. Hence, for $p$ big enough, we should have
\[
\{-k\}\times\big(\interval{p}{+\infty}^2\cap \mathcal{R}_{\mathrm{tail}}\big)
\subseteq Z_{k,m} ,
\]
which is clearly impossible.
\end{rem}

\begin{ex}\label{ex:groupF1}
Let $(\Z\ltimes\overline{\Z})|_{\overline{\N}}$ be the Toeplitz groupoid of order $1$ (cf.~Example \ref{ex:Toeplitzgroupoid}).
Let $\mathcal{F}$ be the subgroupoid of $\Z\times (\Z\ltimes\overline{\Z})|_{\overline{\N}}$ with morphisms
$(k_0,k_1,m_1)$ satisfying
\[
m_1=+\infty\;\Longrightarrow\;k_0+k_1=0 ,
\]
One can verify that $\mathcal{F}$ is indeed a subgroupoid, but not a full subgroupoid
since it has all the objects of $\Z\times (\Z\ltimes\overline{\Z})|_{\overline{\N}}$ but not all the morphisms.
\end{ex}

The groupoid $\mathcal{F}$ in Example \ref{ex:groupF1} is the one introduced in \cite{She97bis}.

\begin{prop}
The map
\begin{equation}\label{finalisoSUq2}
\Z\times (\Z\ltimes\overline{\Z})|_{\overline{\N}}\longrightarrow \mathcal{G}^1 , \qquad
(k_0,k_1,m_1)\longmapsto (k_0,k_1+m_1,m_1)
\end{equation}
induces an isomorphism of topological groupoids between the groupoid $\mathcal{F}$ in Example \ref{ex:groupF1}
and the graph groupoid $\mathcal{G}$ of the graph in Example \ref{ex:SUq2}.
\end{prop}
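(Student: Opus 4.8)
The plan is to show that the map \eqref{finalisoSUq2}, which is a priori defined on the whole product $\Z\times(\Z\ltimes\overline{\Z})|_{\overline{\N}}$, is a homomorphism of topological groupoids onto $\mathcal{G}$, and that its restriction to the subgroupoid $\mathcal{F}$ of Example \ref{ex:groupF1} is both a bijection and a homeomorphism. I would first dispatch the purely algebraic content. A morphism $(k_0,k_1,m_1)$ of the product has source $k_1+m_1$ and target $m_1$ (the objects being identified with $\overline{\N}$), whereas under \eqref{eq:identification} a morphism $(k,a,b)$ of $\mathcal{G}$ has source $a$, target $b$, and composition $(k,a,b)\cdot(k',b,c)=(k+k',a,c)$. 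Since $(k_1,m_1)$ is a morphism of $(\Z\ltimes\overline{\Z})|_{\overline{\N}}$ we have $m_1,k_1+m_1\in\overline{\N}$, so the image $(k_0,k_1+m_1,m_1)$ lands in $\mathcal{G}^1=\Z\times\mathcal{R}_{\mathrm{tail}}$ (both entries finite, or both $+\infty$). A direct check shows source, target, and the identity map on objects $\overline{\N}$ are preserved, and that two composable morphisms, i.e.\ with $m_1=k_1'+m_1'$, satisfy $k_1+m_1=k_1+k_1'+m_1'$, so that their images compose in $\mathcal{G}$ to the image of the product. Hence \eqref{finalisoSUq2} is a functor into $\mathcal{G}$.

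Next I would explain why one must pass to $\mathcal{F}$. On the full product the map fails to be injective: every $(k_0,k_1,+\infty)$ is sent to $(k_0,+\infty,+\infty)$ irrespective of $k_1$. The defining condition of $\mathcal{F}$, namely $k_0+k_1=0$ whenever $m_1=+\infty$, selects exactly one preimage ($k_1=-k_0$) over each such point. I would then verify that the restriction to $\mathcal{F}$ is a bijection onto $\mathcal{G}^1$: a point $(k,a,b)$ with $a,b$ finite has the unique preimage $(k,a-b,b)$, and $(k,+\infty,+\infty)$ has the unique $\mathcal{F}$-preimage $(k,-k,+\infty)$. A bijective groupoid homomorphism is automatically an algebraic isomorphism, so only the topology remains.

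For the homeomorphism I would work with the explicit basis of Lemma \ref{Zjk} on $\mathcal{G}$ and with the product/subspace basis on $\mathcal{F}$, consisting of the singletons $\{(k_0,k_1,m)\}$ with $m$ finite and the tails $\{(k_0,k_1,m_1):m_1\geq p\}\cap\mathcal{F}$. For continuity: the preimage of a singleton in $\Z\times\N^2$ is a single point with finite $m_1$, hence isolated and open; the preimage of $Z_{k,m}$ is $\{(-k,k,m_1):m_1\geq m\}$, which is open and lies in $\mathcal{F}$ since $k_0+k_1=0$ along it. For openness: a singleton with finite $m_1$ maps to a singleton of $\Z\times\N^2$; a tail with $k_0+k_1\neq0$ maps to a set of isolated finite points; and a tail with $k_1=-k_0$ maps onto a set of the form $Z_{-k_0,p'}$. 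Being a continuous open bijection, the map is a homeomorphism, and together with the earlier steps this yields the isomorphism of topological groupoids $\mathcal{F}\cong\mathcal{G}$.

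I expect the topological step to be the main obstacle. By Remark \ref{rem:finer} the groupoid topology on $\mathcal{G}^1$ is strictly finer than the naive product topology, so the matching of neighborhood bases is delicate precisely at the points at infinity. The conceptual point is that the finite points of $\mathcal{G}$ accumulating at $(k_0,+\infty,+\infty)$ lie along the single diagonal $k_1=-k_0$, which is exactly the locus singled out by the $\mathcal{F}$-constraint; making the two bases correspond cleanly along this diagonal is where the care is needed.
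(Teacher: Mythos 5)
Your proposal is correct and follows essentially the same route as the paper's proof: both split the morphisms of $\mathcal{F}$ into the isolated points with $m_1$ finite (matched with the open singletons of $\Z\times\N^2$) and the morphisms at $+\infty$, whose neighborhood basis of tails $\{k_0\}\times\{k_1\}\times\interval{p}{+\infty}$ is carried exactly onto the basic open sets $Z_{k,m}$ of Lemma \ref{Zjk}. The only difference is presentational: you spell out the functoriality check and phrase the topological step as ``continuous open bijection'' where the paper says ``transforms a local basis into a local basis.''
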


\begin{proof}
The groupoid $\mathcal{F}$ has morphisms
$(k_0,k_1,m_1)\in\Z^2\times\N$ with $m_1+k_1\geq 0$ and morphisms
$(-k_1,k_1,+\infty)$ with $k_1\in\Z$. The map \eqref{finalisoSUq2} gives a bijection between the first class of morphisms and the morphisms in $\Z\times\N^2\subseteq\mathcal{G}^1$, and these are open points (of the domain and codomain, respectively).
It also gives a bijection between the second class of morphisms and the morphisms in $\Z\times\{(+\infty,+\infty)\}\subseteq\mathcal{G}^1$. For the second class of morphisms, a neighborhood basis of $(-k_1,k_1,+\infty)$ is given by the family of sets
\[
\{-k_1\}\times\{k_1\}\times\interval{m_1}{+\infty}
\]
for all $m_1\geq\max\{-k_1,0\}$. The map \eqref{finalisoSUq2} transforms this set into the set
$Z_{m_1,k_1}$ of Lemma~\ref{Zjk}. Since it transforms a local basis into a local basis, it is a homeomorphism.
\end{proof}

As a corollary, we get an independent proof of the theorem in \cite{She97bis}.

\begin{cor}[\protect\cite{She97bis}]
$C(S^3_q)$ is isomorphic to the C*-algebra of the groupoid $\mathcal{F}$ in Example \ref{ex:groupF1}.
\end{cor}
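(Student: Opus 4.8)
The plan is to deduce the statement by concatenating the groupoid isomorphism just established with the general passage from graph groupoids to graph C*-algebras. The corollary carries no new combinatorial content: everything reduces to stringing together isomorphisms that are already available, so the argument should be a short chain once the intermediate identifications are made explicit.

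First I would invoke the previous proposition, which exhibits the map \eqref{finalisoSUq2} as an isomorphism of topological groupoids between $\mathcal{F}$ (Example \ref{ex:groupF1}) and the graph groupoid $\mathcal{G}$ of the graph $E$ of Example \ref{ex:SUq2}. Next, because the assignment of a (full or reduced) groupoid C*-algebra preserves isomorphisms, as recalled in Section \ref{sec:7}, an isomorphism $\mathcal{F}\cong\mathcal{G}$ of topological groupoids yields an isomorphism $C^*(\mathcal{F})\cong C^*(\mathcal{G})$; moreover $\mathcal{G}$ is amenable, hence so is the isomorphic $\mathcal{F}$, so full and reduced C*-algebras coincide and ``the C*-algebra of $\mathcal{F}$'' is unambiguous. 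Then Theorem \ref{thm:isoEG} gives $C^*(\mathcal{G})\cong C^*(E)$. Finally, the graph $E$ of Example \ref{ex:SUq2} is precisely the $n=1$ instance of the graph of Figure \ref{fig:sphereB}, so by \cite[Theorem 4.4]{HS02} (equivalently its appendix version, as noted after Theorem \ref{thm:isoEG}) one has $C^*(E)\cong C(S^3_q)$. Composing the resulting C*-algebra isomorphisms delivers $C^*(\mathcal{F})\cong C(S^3_q)$, which is the assertion.

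The step I would flag as the one to handle with care is the passage from the groupoid isomorphism to the C*-algebra isomorphism: one must use the correct direction of the groupoid--C*-algebra correspondence. As stressed in the introduction, this correspondence is not functorial in a naive way and is far from injective (non-isomorphic groupoids can share a groupoid C*-algebra), so the genuine content is that $\mathcal{F}$ and $\mathcal{G}$ are isomorphic as groupoids, not merely that their C*-algebras happen to agree. All the real work therefore sits in the preceding proposition; the corollary itself is a formal consequence, and I expect no obstacle beyond bookkeeping the chain of identifications.
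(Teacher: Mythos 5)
Your proposal is correct and follows exactly the chain the paper intends: the groupoid isomorphism $\mathcal{F}\cong\mathcal{G}$ from the preceding proposition, the fact that isomorphic groupoids have isomorphic (full or reduced, here coinciding by amenability) C*-algebras, Theorem \ref{thm:isoEG}, and the Hong--Szyma{\'n}ski identification $C^*(E)\cong C(S^3_q)$. The paper leaves this concatenation implicit, so there is nothing to add.
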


\section{The graph groupoid of a quantum sphere}

In this section, $n\geq 1$.

\begin{ex}\label{ex:reduction}
Let $(\Z^n\ltimes\overline{\Z}^n)|_{\overline{\N}^n}$ be the Toeplitz groupoid of order $n$ (cf.~Example \ref{ex:Toeplitzgroupoid}).
Explicitly, morphisms in this groupoid are pairs
\begin{equation}\label{eq:ZZnNn}
(\mv{k},\mv{m})\in\Z^n\times\overline{\N}^n
\end{equation}
such that $\mv{k}+\mv{m}\in\overline{\N}^n$. The morphism \eqref{eq:ZZnNn} has source $\mv{k}+\mv{m}$, target $\mv{m}$, and composition and inverse are given by
\[
(\mv{k}',\mv{k}+\mv{m})\cdot (\mv{k},\mv{m})=(\mv{k}+\mv{k}',\mv{m}) , \qquad
(\mv{k},\mv{m})^{-1}=(-\mv{k},\mv{k}+\mv{m}) .
\]
\end{ex}

We extend the equivalence relation $\mathcal{R}_\infty$ in \eqref{eq:Rinfinity} to the Cartesian product $\Z\times\Z^n\times\overline{\N}^n$ in the obvious way, and write
\begin{equation}\label{eq:relF}
(k_0,\mv{k},\mv{m})\sim (k_0',\mv{k}',\mv{m}') \iff k_0=k_0',\mv{k}=\mv{k}'\text{ and }
(\mv{m},\mv{m}')\in\mathcal{R}_\infty
\end{equation}
for all $(k_0,\mv{k},\mv{m})$ and $(k_0',\mv{k}',\mv{m}')$ in $\Z\times\Z^n\times\overline{\N}^n$.

\begin{ex}\label{ex:sheu}
Let $\widetilde{\mathfrak{F}}_n$ be the subgroupoid of $\Z\times (\Z^n\ltimes\overline{\Z}^n)|_{\overline{\N}^n}$ whose morphisms $(k_0,\mv{k},\mv{m})$ satisfy
\begin{equation}\label{eq:fixes}
\sum_{i=0}^{i_1}k_i=0\;\text{ and }\;k_j=0\;\forall\;i_1<j\leq n
\end{equation}
whenever $i_1:=\varepsilon(\mv{m})\leq n$.
We denote by $\mathfrak{F}_n$ the quotient groupoid by the relation \eqref{eq:relF}. This has
set of morphisms $\mathfrak{F}_n^1:=\widetilde{\mathfrak{F}}_n^1/{\sim}$ and set of objects $\mathfrak{F}_n^0:=\widetilde{\mathfrak{F}}_n^0/\mathcal{R}_\infty=\overline{\N}^n\!/\mathcal{R}_\infty$.
\end{ex}

It is not difficult to check that $\widetilde{\mathfrak{F}}_n$ in Example \ref{ex:sheu} is a subgroupoid, and that the composition of morphisms is compatible with the equivalence relation \eqref{eq:relF}, thus inducing a groupoid structure on the quotient. We leave this verification to the reader.

The groupoid $\mathfrak{F}_n$ was introduced by Sheu in \cite{She97}. Its space of units $\mathfrak{F}_n^0$ is homeomorphic to the path space of the graph $E$ of $S^{2n+1}_q$ in Figure \ref{fig:sphereB}. The aim of this section is to show that $\mathfrak{F}_n$ is isomorphic to the graph groupoid of $E$, that in the following we denote by $\mathcal{G}$.

First, we need to introduce another groupoid. Similarly to Example \ref{ex:reduction} one can consider the reduction  $(\Z^n\ltimes\overline{\Z}^n)|_{\overline{\N}^n_{\nearrow}}$ of the transformation groupoid $\Z^n\ltimes\overline{\Z}^n$ to 
the set of objects $\overline{\N}^n_{\nearrow}$, the difference with Example \ref{ex:reduction} being that now $(\mv{k},\mv{m})\in\Z^n\times\overline{\N}^n_{\nearrow}$ must satisfy $\mv{k}+\mv{m}\in\overline{\N}^n_{\nearrow}$.

\begin{df}\label{ex:sheudf}
We denote by $\mathcal{F}$ the subgroupoid of $\Z\times (\Z^n\ltimes\overline{\Z}^n)|_{\overline{\N}_{\nearrow}^n}$ with the same set of objects, and with morphisms $(k_0,\mv{k},\mv{m})$ satisfying the additional condition
\begin{equation}\label{eq:fixesbis}
k_j=-k_0\;\forall\;i_1\leq j\leq n
\end{equation}
whenever $i_1:=\varepsilon(\mv{m})\leq n$.
\end{df}

\begin{prop}
For $\mv{k}$ in either $\Z^n$ or $\overline{\N}^n$, let
\begin{equation}\label{eq:mapf}
f(\mv{k}):=(k_1,k_1+k_2,k_1+k_2+k_3,\ldots,k_1+\ldots+k_n) .
\end{equation}
Then, the map
\[
\mathfrak{F}_n^1\longrightarrow\mathcal{F}^1, \qquad
(k_0,\mv{k},[\mv{m}])\longmapsto \big(k_0,f(\mv{k}),f(\mv{m})\big) ,
\]
is an isomorphism of topological groupoids.
\end{prop}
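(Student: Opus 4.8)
The plan is to check, in turn, that the assignment descends to $\sim$-classes, is a bijection onto $\mathcal{F}^1$, intertwines the two groupoid structures, and is a homeomorphism. Everything rests on three elementary properties of the partial-sum map $f$ in \eqref{eq:mapf}: it is additive, $f(\mv{a}+\mv{b})=f(\mv{a})+f(\mv{b})$ (no cancellation $(+\infty)-(+\infty)$ ever occurs, since in every sum below one summand lies in $\Z^n$); it restricts to a bijection of $\Z^n$ onto itself with inverse $\mv{K}\mapsto(K_1,K_2-K_1,\ldots,K_n-K_{n-1})$; and on $\overline{\N}^n$ it is exactly the map \eqref{eq:partialsums}, which by Proposition \ref{prop:morup} factors through the homeomorphism $\widetilde{F}\colon\overline{\N}^n\!/\mathcal{R}_\infty\xrightarrow{\ \sim\ }\overline{\N}^n_{\nearrow}$. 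Since $f(\mv{m})$ therefore depends only on $[\mv{m}]$, the assignment $(k_0,\mv{k},[\mv{m}])\mapsto(k_0,f(\mv{k}),f(\mv{m}))$ is well defined, and I record that $\varepsilon(f(\mv{m}))=\varepsilon(\mv{m})$ because a partial sum is infinite precisely from the anchor onward.

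First I would match the defining data of the two groupoids. Put $i_1:=\varepsilon(\mv{m})=\varepsilon(f(\mv{m}))$ and $\mv{K}:=f(\mv{k})$. From $\sum_{i=0}^{i_1}k_i=k_0+K_{i_1}$ and $k_j=K_j-K_{j-1}$ one sees that condition \eqref{eq:fixes} defining $\widetilde{\mathfrak{F}}_n$ is equivalent to condition \eqref{eq:fixesbis} defining $\mathcal{F}$: the pair ``$\sum_{i=0}^{i_1}k_i=0$ and $k_j=0$ for $i_1<j\le n$'' says exactly that $K_j=-k_0$ for every $i_1\le j\le n$. The source constraints match up too: the implication $\mv{k}+\mv{m}\in\overline{\N}^n\Rightarrow f(\mv{k}+\mv{m})\in\overline{\N}^n_{\nearrow}$ is automatic from additivity, and the converse holds because \eqref{eq:fixes} makes $\mv{k}$ vanish beyond the anchor, leaving only the finite initial components constrained, where nonnegativity of $\mv{k}+\mv{m}$ is equivalent to the partial sums $f(\mv{k}+\mv{m})=\mv{K}+f(\mv{m})$ being nondecreasing. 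Together with the bijectivity of $f$ on $\Z^n$ and of $\widetilde{F}$ on units, this shows the map is a bijection of $\mathfrak{F}_n^1$ onto $\mathcal{F}^1$. That it is a morphism of groupoids is then immediate from additivity: it sends target $[\mv{m}]$ to $f(\mv{m})$ and source $[\mv{k}+\mv{m}]$ to $\mv{K}+f(\mv{m})$, carries the composite $(k_0+k_0',\mv{k}+\mv{k}',[\mv{m}])$ to $(k_0+k_0',\mv{K}+\mv{K}',f(\mv{m}))$, and respects inverses and units for the same reason.

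The remaining, and genuinely delicate, point is that the map is a homeomorphism; this is the step I expect to be the main obstacle, because $\mathfrak{F}_n^1$ carries the quotient topology coming from $\widetilde{\mathfrak{F}}_n^1\subseteq\Z\times\Z^n\times\overline{\N}^n$ while $\mathcal{F}^1$ carries the subspace topology from $\Z\times\Z^n\times\overline{\N}^n_{\nearrow}$, and one must reconcile the two. My approach is to view the map as induced by
\[
\id_{\Z}\times f\times F\colon \Z\times\Z^n\times\overline{\N}^n\longrightarrow \Z\times\Z^n\times\overline{\N}^n_{\nearrow},
\]
where $F$ denotes \eqref{eq:partialsums} regarded as a map onto its image. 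Here $F$ is continuous and, being a continuous surjection from the compact space $\overline{\N}^n$ onto the Hausdorff space $\overline{\N}^n_{\nearrow}$, is closed; since the factor $\Z\times\Z^n$ is discrete, $\id\times f\times F$ is then a closed continuous surjection, hence a quotient map.

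To finish I would restrict this quotient map to $\widetilde{\mathfrak{F}}_n^1$ and descend. The subset $\widetilde{\mathfrak{F}}_n^1$ is saturated for the relation $\sim$: if $(k_0,\mv{k},\mv{m})$ obeys \eqref{eq:fixes} and $(\mv{m},\mv{m}')\in\mathcal{R}_\infty$, then $\varepsilon(\mv{m}')=i_1$ reproduces the same instance of \eqref{eq:fixes}, while $k_j=0$ for $j>i_1$ and $m_i'=m_i$ for $i<i_1$ force $\mv{k}+\mv{m}'\in\overline{\N}^n$, so $(k_0,\mv{k},\mv{m}')\in\widetilde{\mathfrak{F}}_n^1$. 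A closed map restricted to a saturated subset remains closed onto its image, so the restriction $\widetilde{\mathfrak{F}}_n^1\to\mathcal{F}^1$ is again a (closed) quotient map; as its fibers are precisely the $\sim$-classes, the induced continuous bijection $\mathfrak{F}_n^1\to\mathcal{F}^1$ is a homeomorphism, completing the proof.
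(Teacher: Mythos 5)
Your proposal is correct, and its algebraic core (additivity of $f$, bijectivity on $\Z^n$, the identification of $f|_{\overline{\N}^n}$ with \eqref{eq:partialsums} via Proposition \ref{prop:morup}, and the equivalence of \eqref{eq:fixes} with \eqref{eq:fixesbis} through $K_{i_1}=\sum_{i\le i_1}k_i$ and $k_j=K_j-K_{j-1}$) is exactly the paper's argument. Where you genuinely diverge is the topological step. The paper simply regards $\mathfrak{F}_n^1$ as a subspace of the product $\Z\times\Z^n\times(\overline{\N}^n\!/\mathcal{R}_\infty)$, observes that $\id_{\Z}\times f\times\widetilde{F}$ is a homeomorphism of the ambient spaces, and concludes by restriction; this is shorter but silently identifies the quotient topology on $\widetilde{\mathfrak{F}}_n^1/{\sim}$ with the subspace topology inside the quotient of the ambient space. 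You instead keep the quotient topology, show that $\id_{\Z}\times f\times F$ is a closed quotient map (compact-to-Hausdorff on the $\overline{\N}^n$ factor, discreteness of $\Z\times\Z^n$), check that $\widetilde{\mathfrak{F}}_n^1$ is saturated, and invoke the fact that a closed map restricted to a saturated set is still a closed quotient map onto its image. This buys an honest reconciliation of the quotient-of-subspace versus subspace-of-quotient topologies, which the paper takes for granted. You are also more careful on one small point the paper waves through: the converse implication $f(\mv{k})+f(\mv{m})\in\overline{\N}^n_{\nearrow}\Rightarrow\mv{k}+\mv{m}\in\overline{\N}^n$ is not automatic for arbitrary $\mv{k}$ (past the first infinite entry the partial sums carry no information), and you correctly note that it is condition \eqref{eq:fixes}, forcing $k_j=0$ beyond the anchor, that rescues it. Both approaches are valid; yours is marginally longer but closes two gaps the paper leaves implicit.
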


\begin{proof}
With an abuse of notations we denote by $f$ the map \eqref{eq:mapf} whatever is the domain considered.
As a map $\Z^n\to\Z^n$, $f$ is bijective (thus, a homeomorphism). As a map $\overline{\N}^n\to\overline{\N}^n_{\nearrow}$,
$f$ is the map \eqref{eq:partialsums} inducing a homeomorphism $\overline{\N}^n\!/\mathcal{R}_\infty\to\overline{\N}^n_{\nearrow}$. The map
\[
F:\Z\times\Z^n\times (\overline{\N}^n\!/\mathcal{R}_\infty)\longrightarrow
\Z\times\Z^n\times\overline{\N}^n_{\nearrow}\;
, \qquad
(k_0,\mv{k},[\mv{m}])\longmapsto \big(k_0,f(\mv{k}),f(\mv{m})\big) ,
\]
is then well defined and a homeomorphism.

Now, $\mathfrak{F}_n^1$ is a topological subspace of 
$\Z\times\Z^n\times (\overline{\N}^n\!/\mathcal{R}_\infty)$, and $\mathcal{F}^1$ is a topological subspace of $\Z\times\Z^n\times\overline{\N}^n_{\nearrow}$. We must show that $F$ maps the former subspace into the latter.

Clearly, $\mv{k}+\mv{m}\in\overline{\N}^n$ if and only if $f(\mv{k}+\mv{m})=f(\mv{k})+f(\mv{m})\in\overline{\N}^n_{\nearrow}$.

Let $(k_0,\mv{k},\mv{m})\in\Z\times (\Z^n\ltimes\overline{\Z}^n)|_{\overline{\N}^n}$ and call $\mv{k}'=f(\mv{k})$ and $\mv{m}'=f(\mv{m})$. Observe that $\mv{m}$ and $\mv{m}'$ have the same anchor, say $i_1$. Assume that $i_1\leq n$.
Since $\sum_{j=1}^{i_1}k_j=k_{i_1}'$, the first condition in \eqref{eq:fixes} is equivalent to $k_0+k_{i_1}'=0$.
Moreover, for all $i_1<j\leq n$,
\[
k_j=k'_j-k'_{j-1}=0 \iff k'_j=k'_{j-1} ,
\]
which together with $k_{i_1}'=-k_0$ proves that $F$ transforms indeed the constraint \eqref{eq:fixes} into \eqref{eq:fixesbis}.

Finally, we check that the restriction $F:\mathfrak{F}_n^1\to\mathcal{F}^1$ is an algebraic morphism of groupoids. Firstly,
it sends units $(0,0,[\mv{m}])$ into units $(0,0,f(\mv{m}))$. Moreover:
\begin{multline*}
F\big((k'_0,\mv{k}',[\mv{k}+\mv{m}])\cdot
(k_0,\mv{k},[\mv{m}])\big)=
F\big(k_0+k'_0,\mv{k}+\mv{k}',[\mv{m}]\big) \\
=\big(k_0+k'_0,f(\mv{k})+f(\mv{k}'),f(\mv{m})\big)
=\big(k'_0,f(\mv{k}'),f(\mv{k})+f(\mv{m})\big)\cdot
\big(k_0,f(\mv{k}),f(\mv{m})\big) \\
=F(k'_0,\mv{k}',[\mv{k}+\mv{m}])\cdot
F(k_0,\mv{k},[\mv{m}])
\end{multline*}
for any couple of morphisms $(k'_0,\mv{k}',[\mv{k}+\mv{m}])$ and $(k_0,\mv{k},[\mv{m}])$ in $\mathfrak{F}_n^1$.
\end{proof}

To finish, we now show that the topological groupoid $\mathcal{F}$ in Def.~\ref{ex:sheudf} is isomorphic to the graph groupoid $\mathcal{G}$.
For starters, we identify $\mathcal{G}^0=E^\infty$ with $\overline{\N}^n_{\nearrow}$ via the homeomorphism in Sect.~\ref{sec:viathemap}.
With this identification, the bijection \eqref{eq:wehaveabj} becomes
\[
\mathcal{G}^1 \longrightarrow \Z\times\mathcal{R}_{\mathrm{tail}}=\Big\{(k_0,\mv{m}',\mv{m})\in\Z\times (\overline{\N}^n_{\nearrow})^2: \varepsilon(\mv{m})=\varepsilon(\mv{m}') \Big\} .
\]
We will identify $\mathcal{G}^1$ with the set on the right hand side of this equality. Let us denote by
$\Z\times\mathcal{R}_{\mathrm{tail}}^{\mathrm{finite}}$ the set of morphisms $(k_0,\mv{m}',\mv{m})$ such that $\mv{m}'$ and $\mv{m}$ have finite components, that means $\varepsilon(\mv{m})=\varepsilon(\mv{m}')=n+1$.

\begin{rem}
Call $\Gamma:=\Z^n$, $X:=\overline{\Z}^n$ and consider the map
\begin{equation}\label{eq:Zcan}
\Z\times (\Gamma\times X)\xrightarrow{\quad\id_{\Z}\times\mathrm{can}\quad}
\Z\times (X\times_{X/\Gamma}X)
\end{equation}
where $\mathrm{can}:\Gamma\times X\to X\times_{X/\Gamma}X$ is the canonical map. This is a morphism of topological groupoids if we consider on $X\times_{X/\Gamma}X$ the standard topology. One can check that this map, by restriction and corestriction, gives an algebraic isomorphism $F:\mathcal{F}^1\to\mathcal{G}^1$.
However, since our topology on $\mathcal{G}^1=\Z\times\mathcal{R}_{\mathrm{tail}}\subseteq \Z\times (X\times_{X/\Gamma}X)$ is finer than the standard topology, we cannot conclude that $F$ is continuous.
If the action of $\Gamma$ on $X$ were free, $F^{-1}$ would be a restriction of $\id_{\Z}\times\mathrm{can}^{-1}$, which being continuous for the standard topology on $\Z\times\mathcal{R}_{\mathrm{tail}}$, would be continuous for our finer topology as well. But the action is not free. We are in the worst-case scenario. In fact, we will see (cf.~Prop.~\ref{thm:final}) that one has to modify the map \eqref{eq:Zcan} a little bit to get the desired isomorphism of topological groupoids.
\end{rem}

Let $(k_0,\mv{k},\mv{m})\in\mathcal{F}^1$, assume that $i_1:=\varepsilon(\mv{m})\leq n$, call $\delta=\iota(\mv{m})-\iota(\mv{k}+\mv{m})$ the \emph{offset}, and let $p\geq m_{i_1-1}$ be a positive integer. Define a subset of $\mathcal{G}^1$ as follows:
\begin{equation}\label{eq:thesetZk0}
Z(k_0,\mv{k},\mv{m},p):=
\Big\{ (k_0+\delta,\mv{k}+\mv{m}',\mv{m}')\in\mathcal{G}^1:m'_j=m_j \;\forall\;1\leq j<i_1 \text{ and }m'_{i_1}\geq p \Big\} .
\end{equation}

\begin{lemma}\label{Zbis}
The topology of $\mathcal{G}^1$ is generated by the singletons in $\Z\times\mathcal{R}_{\mathrm{tail}}^{\mathrm{finite}}$ and by the sets \eqref{eq:thesetZk0}.
\end{lemma}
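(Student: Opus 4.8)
The plan is to compare the two generating families head on. By construction the topology of $\mathcal{G}^1$ is generated by the sets $Z(\alpha,\beta)$ of \eqref{eq:basicopenZ}, so it is enough to show that this family and the one in the statement generate the same topology. I would translate each $Z(\alpha,\beta)$ into the coordinates $(k_0,\mv{m}',\mv{m})$ of $\mathcal{G}^1=\Z\times\mathcal{R}_{\mathrm{tail}}$ provided by the identification $E^\infty\cong\overline{\N}^n_{\nearrow}$ of Section \ref{sec:viathemap} and by \eqref{eq:wehaveabj}: an element $(\alpha x,|\beta|-|\alpha|,\beta x)$ of $Z(\alpha,\beta)$ has lag $|\beta|-|\alpha|$, while its source and target tuples are obtained by applying $\Phi$, after prepending a loop as in \eqref{eq:oneloop}, to $\alpha x$ and $\beta x$. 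The whole computation then reduces to counting the loops of $\alpha$, of $\beta$, and of the free tail $x$, and I would organize it according to the common target vertex $v:=t(\alpha)=t(\beta)$.

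If $v=n+1$, the only right-infinite path with source $n+1$ is $\ell_{n+1}^\infty$, so $Z(\alpha,\beta)$ is a singleton and both $\alpha\ell_{n+1}^\infty$ and $\beta\ell_{n+1}^\infty$ translate to tuples with only finite entries, i.e.\ to a point of $\Z\times\mathcal{R}_{\mathrm{tail}}^{\mathrm{finite}}$. Conversely every such point arises: the two tuples determine $\alpha$ and $\beta$ up to trailing loops $\ell_{n+1}$, and appending such loops (which leaves the infinite paths unchanged) shifts $|\beta|-|\alpha|$ by an arbitrary integer, so every lag $k_0\in\Z$ is realized. Hence the singletons in $\Z\times\mathcal{R}_{\mathrm{tail}}^{\mathrm{finite}}$ are exactly the sets $Z(\alpha,\beta)$ with $v=n+1$. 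If instead $v=i_1\leq n$, I would check that the translation of $Z(\alpha,\beta)$ is precisely a set $Z(k_0,\mv{k},\mv{m},p)$ as in \eqref{eq:thesetZk0}: the loops of $\beta$ before vertex $i_1$ fix the entries $m_j$ with $j<i_1$ of the target; the prefix $\alpha$ fixes the source–target difference $\mv{k}$ (and one verifies that the constraints \eqref{eq:fixesbis} hold automatically, this difference being constant past the anchor); the lag equals $k_0+\delta$ with $\delta=\iota(\mv{m})-\iota(\mv{k}+\mv{m})$; and the free tail $x$ lets $m'_{i_1}$ run over all values $\geq p$, where $p=1+(\text{number of loops of }\beta)$ is the minimal loop count before vertex $i_1+1$. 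Letting $\alpha,\beta$ vary then realizes every admissible $(k_0,\mv{k},\mv{m})$ and every $p$ in the reachable range, namely $p\geq m_{i_1-1}$ when $\iota(\mv{m})<i_1$ and $p\geq 1$ when $\iota(\mv{m})=i_1$ (in which case $m_{i_1-1}=0$).

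This already gives one inclusion: each $Z(\alpha,\beta)$ equals one of the listed sets, hence is open for the topology they generate. For the reverse inclusion I must show that every listed set is open in $\mathcal{G}^1$. The singletons and all the reachable $Z(k_0,\mv{k},\mv{m},p)$ are themselves of the form $Z(\alpha,\beta)$, so the only sets still to treat are the boundary ones $Z(k_0,\mv{k},\mv{m},0)$ with $\iota(\mv{m})=\varepsilon(\mv{m})=i_1$. Exactly as in Lemma \ref{Zjk} I would peel these off, writing
\[
Z(k_0,\mv{k},\mv{m},0)=Z(k_0,\mv{k},\mv{m},1)\cup B, \qquad B:=\big\{(k_0+\delta,\mv{k}+\mv{m}',\mv{m}')\in\mathcal{G}^1:m'_j=0\ \text{ for all }j\leq i_1\big\} .
\]
The first piece is a reachable set, hence open, and for $B$ I would show that every point is interior: a point of $B$ has target anchor $i_1'>i_1$, and the set $Z(\alpha',\beta')$ built from the prefixes of its source and target up to vertex $i_1'$ both contains it and is contained in $B$, since all of its elements share the coordinates below $i_1'$ and the same constant source–target difference beyond the anchor. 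Thus $B$, and with it $Z(k_0,\mv{k},\mv{m},0)$, is a union of sets $Z(\alpha,\beta)$.

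I expect the main obstacle to be the bookkeeping of the translation step: keeping track of the extra loop introduced by \eqref{eq:oneloop}, of the offset $\delta$, and of the constancy of the source–target difference past the anchor, so as to match the fixing conditions \eqref{eq:fixesbis} with the combinatorics of the graph. The boundary case $p=0$ does require the separate covering argument above, but it is the exact higher-rank analogue of the $n=1$ situation already settled in Lemma \ref{Zjk}.
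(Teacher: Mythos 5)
Your proposal follows essentially the same route as the paper's proof: translate each $Z(\alpha,\beta)$ through the identification of Section~\ref{sec:viathemap}, split according to whether the common target vertex is $n+1$ or some $i_1\leq n$, and check that the resulting sets are exactly the singletons of $\Z\times\mathcal{R}_{\mathrm{tail}}^{\mathrm{finite}}$ and the sets \eqref{eq:thesetZk0}, with every admissible parameter value realized. The only divergence is your separate covering argument for the ``boundary case'' $p=0$, which is superfluous: the family \eqref{eq:thesetZk0} is only defined for \emph{positive} integers $p\geq m_{i_1-1}$, and your own reachability analysis already accounts for every such set.
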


\begin{proof}
The topology of the graph groupoid is generated by the sets $Z(\alpha,\beta)$ in \eqref{eq:basicopenZ}. We now transform these sets with the composition $\Psi$ of isomorphisms $E^\infty\longrightarrow E^\infty_\ell\stackrel{\Phi}{\longrightarrow} \overline{\N}_{\nearrow}^n$ of Sect.~\ref{sec:viathemap}. Recall that $\Phi(x)_i$ counts the loops in a path $x$ before reaching the vertex $i+1$, and $\Psi$ is $\Phi$ composed with the operation that adds a loop at the head of the path. Thus, $\Psi(x)_i$ counts the number of loops in $x$ before reaching the vertex $i+1$, and adds $1$ to this number if $i\geq i_0$, where $i_0$ is the source of $x$.

Let $\alpha,\beta$ be finite paths that end at the vertex $n+1$.
The corresponding set $Z(\alpha,\beta)$ is a singleton, since $\alpha$ and $\beta$ can only be extended to right-infinite paths by adding an infinite cycle around the vertex $n+1$. The corresponding tuples
$\mv{m}':=G(\alpha\ell_{n+1}^\infty)$ and $\mv{m}:=G(\beta\ell_{n+1}^\infty)$ 
depend on the number of loops in the two paths around each of the first $n$ vertices, and can be arbitrary elements in $\N^n_{\nearrow}$. The singleton $Z(\alpha,\beta)$ is transformed by the identification above into the set
\[
\big\{ (|\beta|-|\alpha|, \mv{m}',\mv{m}) \big\} .
\]
For every fixed $\mv{m}'$ and $\mv{m}$, we can choose $\alpha$ and $\beta$ such that $|\beta|-|\alpha|=k_0$ is any integer number (by looping around the vertex $n+1$ the appropriate number of times). This proves that all points in $\Z\times\mathcal{R}_{\mathrm{tail}}^{\mathrm{finite}}$ are open in $\mathcal{G}^1$.

We now study the set $Z(\alpha,\beta)$ when $\alpha$ and $\beta$ end at a vertex $i_1\leq n$.
They can be extended to right-infinite paths by adding any path $x$ that starts at the vertex $i_1$.
Call
\begin{align*}
(m_1+k_1,\ldots,m_{i_1-1}+k_{i_1-1},m''_{i_1},\ldots,m''_n) &:=\Psi(\alpha x) , \\
(m_1,\ldots,m_{i_1-1},m'_{i_1},\ldots,m'_n) &:=\Psi(\beta x) .
\end{align*}
The subsequences $(m_1,\ldots,m_{i_1-1})$ and $(m_1+k_1,\ldots,m_{i_1-1}+k_{i_1-1})$ are determined by $\beta$ and $\alpha$, respectively, and are arbitrary increasing sequences of natural numbers. We can complete the first sequence to a unique $\mv{m}\in\overline{\N}_{\nearrow}^n$ with anchor $i_1$, by filling the empty slots with $+\infty$.

For $i\geq i_1$, if both $m'_i$ and $m''_i$ are finite, their difference $m'_i-m''_i$ is independent of $x$ and given by the total number of loops in $\beta$ minus the total number of loops in $\alpha$.
Call this number $k_0$ (we can get any arbitrary integer number by letting $\alpha$ and $\beta$ loop around the vertex $i_1$ the appropriate number of times). If $m'_i=m''_i=\infty$, it is still true that $m'_i=m''_i+k_0$. We complete $(k_1,\ldots,k_{i_1-1})$ to an $n$-tuple by setting $k_i:=-k_0$ for all $i\geq i_1$. In this way, $m''_i=m'_i+k_i$ for all $i\geq i_1$. With this choice, $(k_0,\mv{k},\mv{m})\in\mathcal{F}^1$.

Observe that, if $p\in\N$ is the total number of loops in $\beta$, then
\[
m_{i_1}'\geq p+1\geq m_{i_1-1} .
\]
Finally,
\[
|\beta|-|\alpha|=k_0+\delta
\]
where $\delta$ is the offset.

The set $Z(\alpha,\beta)$ becomes the set of all tuples $(k_0+\delta,\mv{k}+\mv{m}',\mv{m}')$ such that
$m'_j=m_j$ for $j<i_1$ and $m'_{i_1}\geq p+1$. After a reparametrization $p+1\to p$, we get the set \eqref{eq:thesetZk0}.
\end{proof}

\begin{prop}\label{thm:final}
The map
\begin{equation}\label{algisoSnq}
\Z\times (\Z^n\ltimes\overline{\Z}^n)|_{\overline{\N}_{\nearrow}^n}\longrightarrow \mathcal{G}^1 , \qquad
\big(k_0,\mv{k},\mv{m}\big)\longmapsto\big(k_0+\iota(\mv{m})-\iota(\mv{k}+\mv{m}),\mv{k}+\mv{m},\mv{m}\big)
\end{equation}
induces an isomorphism $\mathcal{F}\to\mathcal{G}$ of topological groupoids.
\end{prop}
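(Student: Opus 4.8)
The plan is to show that the map \eqref{algisoSnq}, restricted to $\mathcal{F}^1$ and corestricted to $\mathcal{G}^1$, is simultaneously an algebraic isomorphism of groupoids and a homeomorphism. First I would record that the map always lands in $\mathcal{G}^1=\Z\times\mathcal{R}_{\mathrm{tail}}$: for any $(k_0,\mv{k},\mv{m})$ in the transformation groupoid one has $(\mv{k}+\mv{m})_j=+\infty$ exactly when $m_j=+\infty$ (since each $k_j$ is finite), so $\varepsilon(\mv{k}+\mv{m})=\varepsilon(\mv{m})$ and the pair $(\mv{k}+\mv{m},\mv{m})$ is tail equivalent. Thus the corestriction is automatic and does not even use the defining condition of $\mathcal{F}$.

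Next I would check bijectivity onto $\mathcal{G}^1$, which is where condition \eqref{eq:fixesbis} enters. Fix a target $(\ell_0,\mv{m}',\mv{m})\in\mathcal{G}^1$, so $\varepsilon(\mv{m}')=\varepsilon(\mv{m})=:i_1$. Any preimage must satisfy $\mv{k}+\mv{m}=\mv{m}'$ and $k_0=\ell_0-\big(\iota(\mv{m})-\iota(\mv{m}')\big)$; this determines $k_0$ and all $k_j$ with $j<i_1$, but leaves $k_{i_1},\ldots,k_n$ free (the slots where both $\mv{m}$ and $\mv{m}'$ equal $+\infty$, reflecting the non-freeness of the action). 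Condition \eqref{eq:fixesbis}, namely $k_j=-k_0$ for $i_1\leq j\leq n$, pins down precisely these free coordinates, so there is a unique preimage inside $\mathcal{F}^1$. For the morphism property I would verify that units map to units and that the offset telescopes under composition: writing $\delta=\iota(\mv{m})-\iota(\mv{k}+\mv{m})$, the intermediate term $\iota(\mv{k}+\mv{m})$ contributed by the two factors cancels, leaving exactly the offset attached to the composite. (This computation does not require the $\mathcal{F}$-condition and holds on the whole product groupoid.)

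The substance of the proof is the homeomorphism, for which I would match neighborhood bases using Lemma \ref{Zbis}. For a point with $\varepsilon(\mv{m})=n+1$ both sides are open singletons ($\{\mv{m}\}$ is open in $\overline{\N}^n_{\nearrow}$ and its image lies in $\Z\times\mathcal{R}_{\mathrm{tail}}^{\mathrm{finite}}$), so nothing is needed. For a point $(k_0,\mv{k},\mv{m})\in\mathcal{F}^1$ with $i_1:=\varepsilon(\mv{m})\leq n$, I take the boxes $\{k_0\}\times\{\mv{k}\}\times V_p(\mv{m})$, where $V_p(\mv{m})$ is the neighborhood basis of $\mv{m}$ of the form \eqref{eq:riciclaB}. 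The key observations are: (a) for $p$ large the whole box lies in $\mathcal{F}^1$, since every $\mv{m}'\in V_p(\mv{m})$ has anchor $\geq i_1$, so the equalities $k_j=-k_0$ ($j\geq i_1$) already satisfy the $\mathcal{F}$-constraint for $\mv{m}'$, and $\mv{k}+\mv{m}'\in\overline{\N}^n_{\nearrow}$ once $p$ exceeds $\max\{m_{i_1-1},k_0\}$; and (b) the offset $\iota(\mv{m}')-\iota(\mv{k}+\mv{m}')$ is constant on this box and equal to $\delta$, because the first $i_1-1$ entries are frozen and the entry in position $i_1$ is strictly positive in both $\mv{m}'$ and $\mv{k}+\mv{m}'$. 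Consequently the image of $\{k_0\}\times\{\mv{k}\}\times V_p(\mv{m})$ is exactly the set $Z(k_0,\mv{k},\mv{m},p)$ of \eqref{eq:thesetZk0} — using that $\mv{m}'$ is increasing, so the single constraint $m'_{i_1}\geq p$ in \eqref{eq:thesetZk0} is equivalent to $m'_j\geq p$ for all $j\geq i_1$ as in $V_p(\mv{m})$. Carrying a neighborhood basis at each point onto a neighborhood basis at the image, the map is a homeomorphism.

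The main obstacle is this topological matching, and its crux is understanding why the offset $\delta$ is the correct correction. As the remark preceding Lemma \ref{Zbis} explains, the uncorrected canonical map fails to be continuous because the topology on $\mathcal{G}^1$ is strictly finer than the standard one near the non-free points; the content of \eqref{algisoSnq} is that adding $\iota(\mv{m})-\iota(\mv{k}+\mv{m})$ to the $\Z$-component precisely compensates for this, so that the transported basic box $V_p(\mv{m})$ reproduces the generator $Z(k_0,\mv{k},\mv{m},p)$ of Lemma \ref{Zbis} on the nose. Verifying (a) and (b) — in particular the stability of the $\mathcal{F}$-constraint under enlarging the anchor and the local constancy of $\iota$ — is the only delicate point; everything else is bookkeeping.
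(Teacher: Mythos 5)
Your proposal is correct and follows essentially the same route as the paper: verify the algebraic isomorphism (with condition \eqref{eq:fixesbis} pinning down the coordinates left free by the non-free action, and the offset telescoping under composition), then match the neighborhood bases $\{k_0\}\times\{\mv{k}\}\times V_p(\mv{m})$ against the generators $Z(k_0,\mv{k},\mv{m},p)$ of Lemma \ref{Zbis}. The only difference is organizational --- the paper factors the map as $H\circ G$ through the canonical map and analyzes the fibers of $G$, whereas you treat the map directly and are somewhat more explicit about the local constancy of the offset --- but the substance is identical.
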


\noindent
Note that without the offset term, \eqref{algisoSnq} would be a restriction and corestriction of the map \eqref{eq:Zcan}.

\begin{proof}
Call $F$ the map \eqref{algisoSnq}. This map is a composition
\begin{equation}\label{eq:decompose}
\Z\times (\Z^n\ltimes\overline{\Z}^n)|_{\overline{\N}_{\nearrow}^n}
\xrightarrow{\quad G\quad}\mathcal{G}^1
\xrightarrow{\quad H\quad}\mathcal{G}^1 ,
\end{equation}
where
\[
G(k_0,\mv{k},\mv{m}):=(k_0,\mv{k}+\mv{m},\mv{m})
\]
is a restriction and corestriction of $\id_{\Z}\times\mathrm{can}$, and
\[
H(k_0,\mv{m}',\mv{m}):=\big(k_0+\iota(\mv{m})-\iota(\mv{m}'),\mv{m}',\mv{m}\big) .
\]
The map $G$ is a surjective algebraic morphism of groupoids. The map $H$ is bijective, and is an algebraic morphism of groupoids, since it doesn't change source and target of a morphism, is the identity on units, and transforms compositions into compositions:
\begin{align*}
H\big((k_0',\mv{m}'',\mv{m}')\cdot (k_0,\mv{m}',\mv{m})\big)
&=H(k_0'+k_0,\mv{m}'',\mv{m}) \\
&=\big(k_0'+k_0+\iota(\mv{m})-\iota(\mv{m}''),\mv{m}'',\mv{m}\big) \\
&=\big(k_0'+k_0+\iota(\mv{m})-\iota(\mv{m}')+\iota(\mv{m}')-\iota(\mv{m}''),\mv{m}'',\mv{m}\big) \\
&=\big(k_0'+\iota(\mv{m}')-\iota(\mv{m}''),\mv{m}'',\mv{m}'\big)\cdot
\big(k_0+\iota(\mv{m})-\iota(\mv{m}'),\mv{m}',\mv{m}\big) \\
&=H(k_0',\mv{m}'',\mv{m}')\cdot H(k_0,\mv{m}',\mv{m}) .
\end{align*}
Thus, the composition \eqref{eq:decompose} is a (surjective) algebraic morphism of groupoids as claimed.

If $\mv{m}\in\N_{\nearrow}^n$ has anchor $n+1$, the fiber of $G$ over $(k_0,\mv{k}+\mv{m},\mv{m})$ consists of the single point $(k_0,\mv{k},\mv{m})$, which belongs to $\mathcal{F}^1$. On the other hand, if $\mv{m}$ has anchor $i_1\leq n$, then
\[
G^{-1}(k_0,\mv{k}+\mv{m},\mv{m})=\Big\{
(k_0,\mv{k}',\mv{m}):
k_1'=k_1,\ldots,k_{i_1-1}'=k_{i_1-1}
\Big\} .
\]
The condition \eqref{eq:fixesbis} fixes the value of $\mv{k}'$, so that the restriction of $G$ to $\mathcal{F}^1$ is a bijection,
and the restriction $F=H\circ G:\mathcal{F}^1\to\mathcal{G}^1$ is an algebraic isomorphism of groupoids.
We want to show that it is also a homeomorphism.

Clearly $(k_0,\mv{k},\mv{m})\in\mathcal{F}^1$ has no infinities if and only if $F(k_0,\mv{k},\mv{m})$ has no infinities. Thus, $F$ transforms open points of $\mathcal{F}^1$ into open points of $\mathcal{G}^1$.
On the other hand, if $i_1:=\varepsilon(\mv{m})\leq n$, a neighborhood basis of $(k_0,\mv{k},\mv{m})$ is obtained by intersecting $\mathcal{F}^1$ with the set
\[
\{k_0\}\times\{k_1\}\times\ldots\times\{k_n\}\times\{m_1\}\times\ldots\times\{m_{i_1-1}\}\times\interval{p}{+\infty}^{n-i_1+1}
\]
for every $p\geq m_{i_1-1}$. The map $F$ transforms this set into the set $Z(k_0,\mv{k},\mv{m},p)$ given by \eqref{eq:thesetZk0}. Thus, it transforms a (topological) basis of $\mathcal{F}^1$ into a (topological) basis of $\mathcal{G}^1$.
\end{proof}

As a corollary, we get an independent proof of the theorem in \cite{She97}.

\begin{cor}[\protect\cite{She97}]
$C(S^{2n+1}_q)$ is isomorphic to the C*-algebra of the groupoid $\mathfrak{F}_n$ in Example~\ref{ex:sheu}.
\end{cor}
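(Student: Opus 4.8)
The plan is to chain together the two groupoid isomorphisms established in this section and then pass to C*-algebras; the corollary is a formal consequence of work already done. First I would note that the proposition preceding Lemma~\ref{Zbis} exhibits an isomorphism of topological groupoids $\mathfrak{F}_n\cong\mathcal{F}$, where $\mathcal{F}$ is the groupoid of Definition~\ref{ex:sheudf}, while Proposition~\ref{thm:final} exhibits an isomorphism $\mathcal{F}\cong\mathcal{G}$ with the graph groupoid $\mathcal{G}$ of the graph $E$ in Figure~\ref{fig:sphereB}. Composing these yields an isomorphism of topological groupoids $\mathfrak{F}_n\cong\mathcal{G}$.

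Next I would invoke the fact recalled at the beginning of Section~\ref{sec:7}, namely that the assignment of a (full or reduced) groupoid C*-algebra preserves isomorphisms: isomorphic topological groupoids have isomorphic groupoid C*-algebras. Applying this to $\mathfrak{F}_n\cong\mathcal{G}$ gives $C^*(\mathfrak{F}_n)\cong C^*(\mathcal{G})$. Here it is worth stressing that both groupoids are \'etale and amenable, so that the full and reduced C*-algebras coincide and there is no ambiguity in the statement; this is the one point that deserves care, and it has already been settled when the graph groupoid was introduced in Section~\ref{sec:7}.

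Finally, it remains only to identify $C^*(\mathcal{G})$ with $C(S^{2n+1}_q)$. This was observed at the end of Section~\ref{sec:7}: by Theorem~\ref{thm:isoEG} one has $C^*(\mathcal{G})\cong C^*(E)$, and by the appendix of \cite{HS02} the graph C*-algebra $C^*(E)$ is isomorphic to $C(S^{2n+1}_q)$. Combining the three isomorphisms, $C(S^{2n+1}_q)\cong C^*(E)\cong C^*(\mathcal{G})\cong C^*(\mathfrak{F}_n)$, which is exactly the assertion of the corollary.

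Since every link in this chain is one of the results proved above, I do not expect any genuine obstacle in the argument for the corollary itself; the substantive content lies entirely in the two preceding propositions constructing the explicit groupoid isomorphisms, and in particular in the topological (as opposed to merely algebraic) part of Proposition~\ref{thm:final}, where the finer topology on $\mathcal{G}^1$ had to be matched against a basis of $\mathcal{F}^1$.
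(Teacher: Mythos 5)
Your proposal is correct and follows exactly the chain the paper intends: the isomorphism $\mathfrak{F}_n\cong\mathcal{F}$ from the proposition after Definition~\ref{ex:sheudf}, the isomorphism $\mathcal{F}\cong\mathcal{G}$ of Proposition~\ref{thm:final}, the invariance of groupoid C*-algebras under groupoid isomorphism noted in Section~\ref{sec:7}, and finally $C^*(\mathcal{G})\cong C^*(E)\cong C(S^{2n+1}_q)$ via Theorem~\ref{thm:isoEG} and the appendix of \cite{HS02}. The paper leaves this corollary without an explicit written proof precisely because it is this formal concatenation, so there is nothing to add.
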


\smallskip

\begin{center}
\textsc{Acknowledgements}
\end{center}

\noindent
The Author is a member of INdAM-GNSAGA (Istituto Nazionale di Alta Matematica ``F.~Severi'') -- Unit\`a di Napoli
and of INFN -- Sezione di Napoli.

\bigskip

\begin{center}
\textsc{Declarations}
\end{center}

\noindent\textbf{Ethical Approval.}
Not applicable.
 
\smallskip

\noindent\textbf{Competing interests.}
Not applicable.

\smallskip

\noindent\textbf{Funding.}
This research is part of the EU Staff Exchange project 101086394 ``Operator Algebras That One Can See''.

\smallskip

\noindent\textbf{Availability of data and materials.}
Not applicable.

\medskip


\begin{thebibliography}{1}
\itemsep=2pt

\bibitem{ADHT22}
F. Arici, F. D'Andrea, P.M. Hajac and M. Tobolski, \textit{An equivariant pullback structure of trimmable graph C*-algebras}, J. Noncommut. Geom. 16(2022), 761--785. \doi{10.4171/JNCG/421}

\bibitem{AM21}
K. Austin and A. Mitra, \textit{Groupoid models of C*-algebras and the Gelfand functor}, New York J. Math. 27 (2021), 740--775. \web{https://nyjm.albany.edu/j/2021/27-28.html}

\bibitem{BPRS}
T. Bates, D. Pask, I. Raeburn and W. Szyma\'nski, \textit{The C*-algebras of row finite graphs},
New York J.~Math. 6 (2000), 307--324. \web{https://www.emis.de/journals/NYJM/nyjm/nyjm/j/2000/6-14.html}

\bibitem{BCL14}
F. Bonechi, N. Ciccoli and J. Qiu, \textit{Quantization of Poisson manifolds from the integrability of the modular function}, Commun. Math. Phys. 331 (2014), 851--885. \doi{10.1007/s00220-014-2050-9}

\bibitem{Dad71}
E.C. Dade, \textit{Deux groupes finis distincts ayant la m{\^e}me alg{\`e}bre de groupe sur tout corps}, Math. Z. 119 (1971) 345--348. \doi{10.1007/BF01109886}

\bibitem{Dan23}
F. D'Andrea, \textit{Quantum spheres as graph C*-algebras: a review} (2023), \arxiv{2312.16481} [math.OA].

\bibitem{Dan24b}
F. D'Andrea, \textit{Isomorphisms of quantum spheres} (2024), \arxiv{2406.17288} [math.QA].

\bibitem{DHMSZ20}
F. D'Andrea, P.M. Hajac, T. Maszczyk, A. Sheu and B. Zielinski, \textit{The K-theory type of quantum CW-complexes},
\arxiv{2002.09015} [math.KT].

\bibitem{DL12}
F. D'Andrea and G. Landi, \textit{Geometry of quantum projective spaces}, Keio COE Lecture Series on Mathematical Science ``Noncommutative Geometry and Physics 3'' (2013), 373--416. \doi{10.1142/9789814425018_0014}

\bibitem{HS02}
J.H.~Hong and W.~Szyma{\'n}ski, \textit{Quantum spheres and projective spaces
 as graph algebras}, Commun. Math. Phys. 232 (2002), 157--188. \doi{10.1007/s00220-002-0732-1}

\bibitem{KPRR97}
A. Kumjian, D. Pask, I. Raeburn and J. Renault, \textit{Graphs, groupoids, and Cuntz-Krieger algebras}, J. Funct. Anal. 144 (1997), 505--541. \doi{10.1006/jfan.1996.3001}

\bibitem{R05}
I.~Raeburn, \textit{Graph algebras}, CBMS Regional Conference Series in Mathematics 103, AMS 2005. \isbn{9780821836606}

\bibitem{She97bis}
A.J.L. Sheu, \textit{Compact quantum groups and groupoid C*-algebras}, J. Funct. Anal. 144 (1997), 371--393. \doi{10.1006/jfan.1996.2999}

\bibitem{She97}
A.J.L. Sheu, \textit{Quantum spheres as groupoid C*-algebras}, The Quarterly Journal of Mathematics 48 (1997), 503--510. \doi{10.1093/qmath/48.4.503}

\bibitem{She19}
A. Sheu, \textit{Projections over quantum homogeneous odd-dimensional spheres}, J. Funct. Anal. 277 (2019), 3491--3512. \doi{10.1016/j.jfa.2019.05.006}

\bibitem{VS91}
L.L. Vaksman and Y.S. Soibelman, \textit{The algebra of functions on quantum $SU(n + 1)$ group and odd-dimensional quantum spheres}, Leningrad Math. J. 2 (1991) 1023--1042; translation from Algebra i Analiz 2 (1990), 101--120. \web{http://mi.mathnet.ru/aa208}

\end{thebibliography}
\end{document}